\documentclass[notitlepage,11pt,reqno]{amsart}
\usepackage[foot]{amsaddr}
\usepackage[numbers,sort]{natbib}

\usepackage{amssymb,nicefrac,bm,upgreek,mathtools,verbatim}
\usepackage[final]{hyperref}
\usepackage[mathscr]{eucal}
\usepackage{dsfont}

\usepackage{color}
\definecolor{dmagenta}{rgb}{.4,.1,.5}
\definecolor{dblue}{rgb}{.0,.0,.5}
\definecolor{mblue}{rgb}{.0,.0,.8}
\definecolor{ddblue}{rgb}{.0,.0,.4}
\definecolor{dred}{rgb}{.6,.0,.0}
\definecolor{dgreen}{rgb}{.0,.5,.0}
\definecolor{Eeom}{rgb}{.0,.0,.5}

\usepackage[normalem]{ulem}

\usepackage[margin=1in]{geometry}
\allowdisplaybreaks

\newtheorem{lemma}{Lemma}[section]
\newtheorem{theorem}{Theorem}[section]

\theoremstyle{definition}
\newtheorem{definition}{Definition}[section]
\newtheorem{assumption}{Assumption}[section]

\newtheorem{example}{Example}[section]
\newtheorem{remark}{Remark}[section]

\numberwithin{equation}{section}

\hypersetup{
  colorlinks=true,
  citecolor=dblue,
  linkcolor=dblue,
  frenchlinks=false,
  pdfborder={0 0 0},
  naturalnames=false,
  hypertexnames=false,
  breaklinks}
\usepackage[capitalize,nameinlink]{cleveref}

\crefname{section}{Section}{Sections}
\crefname{subsection}{Subsection}{Subsections}
\crefname{condition}{Condition}{Conditions}
\crefname{hypothesis}{Hypothesis}{Conditions}
\crefname{assumption}{Assumption}{Assumptions}
\crefname{lemma}{Lemma}{Lemmas}
\crefname{claim}{Claim}{Claims}

\Crefname{figure}{Figure}{Figures}

\crefformat{equation}{\textup{#2(#1)#3}}
\crefrangeformat{equation}{\textup{#3(#1)#4--#5(#2)#6}}
\crefmultiformat{equation}{\textup{#2(#1)#3}}{ and \textup{#2(#1)#3}}
{, \textup{#2(#1)#3}}{, and \textup{#2(#1)#3}}
\crefrangemultiformat{equation}{\textup{#3(#1)#4--#5(#2)#6}}%
{ and \textup{#3(#1)#4--#5(#2)#6}}{, \textup{#3(#1)#4--#5(#2)#6}}%
{, and \textup{#3(#1)#4--#5(#2)#6}}

\Crefformat{equation}{#2Equation~\textup{(#1)}#3}
\Crefrangeformat{equation}{Equations~\textup{#3(#1)#4--#5(#2)#6}}
\Crefmultiformat{equation}{Equations~\textup{#2(#1)#3}}{ and \textup{#2(#1)#3}}
{, \textup{#2(#1)#3}}{, and \textup{#2(#1)#3}}
\Crefrangemultiformat{equation}{Equations~\textup{#3(#1)#4--#5(#2)#6}}%
{ and \textup{#3(#1)#4--#5(#2)#6}}{, \textup{#3(#1)#4--#5(#2)#6}}%
{, and \textup{#3(#1)#4--#5(#2)#6}}

\crefdefaultlabelformat{#2\textup{#1}#3}

%

\makeatletter
\DeclareRobustCommand\widecheck[1]{{\mathpalette\@widecheck{#1}}}
\def\@widecheck#1#2{%
    \setbox\z@\hbox{\m@th$#1#2$}%
    \setbox\tw@\hbox{\m@th$#1%
       \widehat{%
          \vrule\@width\z@\@height\ht\z@
          \vrule\@height\z@\@width\wd\z@}$}%
    \dp\tw@-\ht\z@
    \@tempdima\ht\z@ \advance\@tempdima2\ht\tw@ \divide\@tempdima\thr@@
    \setbox\tw@\hbox{%
       \raise\@tempdima\hbox{\scalebox{1}[-1]{\lower\@tempdima\box
\tw@}}}%
    {\ooalign{\box\tw@ \cr \box\z@}}}

\def\subsection{\@startsection{subsection}{0}%
\z@{\linespacing\@plus\linespacing}{\linespacing}%
{\bf}}

\makeatother


\DeclareMathOperator{\Exp}{\mathbb{E}} 
\DeclareMathOperator{\Prob}{\mathbb{P}} 
\newcommand{\D}{\mathrm{d}}          
\newcommand{\RR}{\mathbb{R}}         
\newcommand{\Rd}{{\mathbb{R}^d}}       
\newcommand{\Ind}{\mathds{1}}            





\newcommand{\cA}{\mathcal{A}}
\newcommand{\sB}{\mathscr{B}}    

\newcommand{\cC}{\mathcal{C}}     
\newcommand{\cD}{\mathrm{D}}     





\newcommand{\sM}{\mathscr{M}}     
\newcommand{\cO}{\mathcal{O}}

\newcommand{\cS}{\mathcal{S}}
\newcommand{\cT}{\mathcal{T}}

\newcommand{\abs}[1]{\lvert#1\rvert}
\newcommand{\norm}[1]{\lVert#1\rVert}

\providecommand{\pro}[1]{(#1_t)_{t \geq 0}}

\providecommand{\seq}[1]{(#1_n)_{n\in \mathbb{N}}}

\DeclareMathOperator{\diam}{diam}

\newcommand{\ex}{\mathbb{E}}

\DeclareMathOperator{\dist}{dist}

\DeclareMathOperator{\Deg}{\mathrm{deg}}

\DeclareMathOperator{\Psidel}{\Psi(-\Delta)}

\setcounter{tocdepth}{2}
\begin{document}

\title[Semilinear nonlocal operators with exterior condition]%
{\sc \textbf{Existence and non-existence results for a class of  semilinear nonlocal operators with exterior condition}}

\author[Anup Biswas]{Anup Biswas}

\address{Anup Biswas \\
Department of Mathematics, Indian Institute of Science Education and Research, Dr. Homi Bhabha Road,
Pune 411008, India, anup@iiserpune.ac.in}

\date{}


\begin{abstract}
We consider a class of semilinear nonlocal problems with vanishing exterior condition
and establish a Ambrosetti-Prodi type phenomenon 
when the nonlinear term satisfies certain conditions.
Our technique makes use of the probabilistic tools and heat kernel estimates.
\end{abstract}
\keywords{Ambrosetti-Prodi problem, bifurcations of solutions, multiplicity of solutions,
nonlocal Schr\"{o}dinger operator, principal eigenvalues, maximum principles.}
\subjclass[2000]{35J60, 35J55, 58J55}

\maketitle

\section{Introduction}
In a seminal work \cite{AP72} Ambrosetti and Prodi consider the problem
\begin{equation}\label{AP}
-\Delta u + f(u)= h(x)\quad \text{in}\; \cD, \quad \text{and}\quad u=0\quad \text{on}\; \partial\cD,\tag{AP}
\end{equation}
for a bounded $\cC^{2, \alpha}$ domain $\cD$  and study existence of solutions for the above problem. The authors have shown that
provided $f$ is strictly convex with $f(0)=0$ and 
$$0< \lim_{s\to-\infty}\, f'(s)< \lambda_1<\lim_{s\to\infty} f'(s)<\lambda_2,$$
where $\lambda_1, \lambda_2$ are the first two eigenvalues of $-\Delta$, there exists a $\cC^1$ manifold $\sM_1$ in $\cC^\alpha(\bar\cD)$ which splits
the space $\cC^\alpha(\bar\cD)$ into two open sets $\sM_0$ and $\sM_2$ with the following property: \eqref{AP} has no solution for $h\in\sM_0$, exactly one
solution for $h\in\sM_1$ and exactly two solutions for $h\in\sM_2$. Following this fundamental observation, much work has been done in the direction of
relaxing the conditions or generalizing it to non-linear partial equations or systems. In
\cite{BP75} Berger and Podolak propose a useful reformulation of the above problem as follows.
\begin{equation}\label{BP}
-\Delta u = f(u) + \rho\,\Phi_1 + h(x)\quad \text{in}\; \cD, \quad \text{and}\quad u=0\quad \text{on}\; \partial\cD,\tag{BP}
\end{equation}
where $\Phi_1$ is the principal eigenfunction of the Laplace operator in $\cD$. Under suitable conditions, it is shown in \cite{BP75} that for a real $\rho^*=\rho^*(h)$,
\eqref{BP} has no solution for $\rho>\rho^*$, it has exactly one solution for $\rho=\rho^*$ and exactly two solutions for $\rho<\rho^*$.
For further developments  on Ambrosetti-Prodi type problems we refer to \cite{AH79, D78, FS84, MRZ, KW75, S14} and references therein. 
There are also some recent works on Ambrosetti-Prodi problems involving fractional Laplacian operators, see \cite{BL18, P17}.

The goal of this article is to generalize the above results to a wider class of operators such as $\Psi(-\Delta)$. By $-\Psi(-\Delta)$ we denote the generator of a 
subordinate Brownian motion where the subordinator having Laplace exponent given by $\Psi$. See Example~\ref{Eg2.1} below for some interesting examples of $\Psi(-\Delta)$. More precisely, given a bounded $\cC^{1,1}$ domain $\cD$ we consider the problem
\begin{equation*}
\left\{\begin{array}{ll}
\Psidel u = f(x, u) + \rho\, \Phi_1 + h(x) \quad \text{in}\, \; \cD, \\
\quad \quad \;\;\; u = 0\, \qquad\qquad\qquad\qquad \quad\, \text{in}\;\; \cD^c,
\end{array} \right.
\end{equation*}
where $f, h$ are given continuous functions and $f$ satisfies Ambrosetti-Prodi type conditions (see Assumption [AP] below). One of our main results (Theorem~\ref{T-AP})
can be informally stated as follows.

\vspace{.1in}

\noindent{\it There exists a real $\rho^*$ such that the above problem does not have any solution for $\rho>\rho^*$, at least one 
solution for $\rho=\rho^*$ and at least two solutions for $\rho<\rho^*$.}

The central idea of the proof remains the same as in \cite{BL18, DFS, S14} where one has to construct a minimal solution for certain values of $\rho$, and 
find bounds $\norm{u}_{L^\infty(\cD)}$ and  then use a degree theory argument to get to the conclusion. Some key tools required for this
methodology to work are (1) {\it refined maximum principle} (see Theorem~\ref{T2.2} below), (2) boundary behaviour of the solutions
and (3) Hopf's lemma. Recently, a version of refined maximum principle is obtained in \cite{BL17b} whereas the boundary behaviour of the solution has been obtained by \cite{KKLL}. As a substitute to the Hopf's lemma we use the heat kernel estimates from \cite{BGR14, CKS}. With these tools in hand we employ  more technical arguments,
compare to the existing literature, to obtain our results.

\section{Setting and statement of main result}

\subsection{Subordinate Brownian motion}
A Bernstein function is a non-negative completely monotone function, i.e., an element of the set
$$
\mathcal B = \left\{f \in C^\infty((0,\infty)): \, f \geq 0 \;\; \mbox{and} \:\; (-1)^n\frac{d^n f}{dx^n} \leq 0,
\; \mbox{for all $n \in \mathbb N$}\right\}.
$$
In particular, Bernstein functions are increasing and concave. We will make use below of the subset
$$
{\mathcal B}_0 = \left\{f \in \mathcal B: \, \lim_{u\downarrow 0} f(u) = 0 \right\}.
$$
Let $\mathcal M$ be the set of Borel measures $\mu$ on $\RR \setminus \{0\}$ with the property that
$$
\mu((-\infty,0)) = 0 \quad \mbox{and} \quad \int_{\RR\setminus\{0\}} (y \wedge 1) \mu(dy) < \infty.
$$
Notice that, in particular, $\int_{\RR\setminus\{0\}} (y^2 \wedge 1) \mu(dy) < \infty$ holds, thus $\mu$ is a L\'evy
measure supported on the positive semi-axis. It is well-known then that every Bernstein function $\Psi \in
{\mathcal B}_0$ can be represented in the form
\begin{equation}
\Psi(u) = bu + \int_{(0,\infty)} (1 - e^{-yu}) \mu(\D{y})
\end{equation}
with $b \geq 0$, moreover, the map $[0,\infty) \times \mathcal M \ni (b,\mu) \mapsto \Psi \in {\mathcal B}_0$ is
bijective. $\Psi$ is said to be a complete Bernstein function (see \cite[Chapter~6]{SSV}) if there exists a Bernstein function $\widetilde\Psi$
such that
$$
\Psi(u)= u^2 \mathcal{L}(\widetilde\Psi)(u), \quad u>0\,,
$$
where $\mathcal{L}$ stands for the Laplace transformation. It is known that every complete Bernstein function is also
a Bernstein function. Also, for a complete Bernstein function the L\'evy measure $\mu(\D{y})$ has a completely monotone
density with respect to the Lebesgue measure. The class of complete Bernstein functions is large, including important
cases such as (i) $u^{\alpha/2}, \, \alpha\in(0, 2]$; (ii) $(u+m^{2/\alpha})^{\alpha/2}-m, \, m\geq 0, \alpha
\in (0, 2)$; (iii)$u^{\alpha/2} + u^{\beta/2}, \, 0 < \beta < \alpha \in(0, 2]$; (iv) $\log(1+u^{\alpha/2})$, $\alpha
\in (0,2]$, (v) $u^{\alpha/2}(\log(1+u))^{\beta/2}$, $\alpha \in (0,2)$, $\beta \in (0, 2-\alpha)$, (vi) $u^{\alpha/2}
(\log(1+u))^{-\beta/2}$, $\alpha \in (0,2]$, $\beta \in [0,\alpha)$. On the other hand, the Bernstein function
$1 - e^{-u}$ is not a complete Bernstein function. For a detailed discussion of Bernstein functions we refer to the
monograph \cite{SSV}.

Bernstein functions are closely related to subordinators, and we will use this relationship below. Recall that a
one-dimensional L\'evy process $\pro S$ on a probability space $(\Omega_S, {\mathcal F}_S, \mathbb P_S)$ is called a
subordinator whenever it satisfies $S_s \leq S_t$ for $s \leq t$, $\mathbb P_S$-almost surely.
A basic fact is that the Laplace transform of a subordinator is given by a Bernstein function, i.e.,
\begin{equation}
\label{lapla}
\ex_{\mathbb P_S} [e^{-uS_t}] = e^{-t\Psi(u)}, \quad t \geq 0,
\end{equation}
holds, where $\Psi \in {\mathcal B}_0$. In particular, there is a bijection between the set of subordinators on a given
probability space and Bernstein functions with vanishing right limits at zero; to emphasize this, we will occasionally
write $\pro {S^\Psi}$ for the unique subordinator associated with Bernstein function $\Psi$. Corresponding to the
examples of Bernstein functions above, the related processes are (i) $\alpha/2$-stable subordinator, (ii) relativistic
$\alpha/2$-stable subordinator, (iii) sums of independent subordinators of different indeces, (iv) geometric $\alpha/2$-stable
subordinators (specifically, the Gamma-subordinator for $\alpha = 2$), etc. The non-complete Bernstein function mentioned
above describes the Poisson subordinator.

Let $\pro B$ be $\Rd$-valued a Brownian motion on Wiener space $(\Omega_W,{\mathcal F}_W, \mathbb P_W)$, running twice
as fast as standard $d$-dimensional Brownian motion, and let $\pro {S^\Psi}$ be an independent subordinator. The random
process
$$
\Omega_W \times \Omega_S \ni (\omega_1,\omega_2) \mapsto B_{S_t(\omega_2)}(\omega_1) \in \Rd
$$
is called subordinate Brownian motion under $\pro {S^\Psi}$. For simplicity, we will denote a subordinate Brownian motion
by $\pro X$, its probability measure for the process starting at $x \in \Rd$ by $\mathbb P^x$, and expectation with respect
to this measure by $\ex^x$. Note that the characteristic exponent of $\pro X$ is given by $\Psi(\abs{x}^2)$. It is also known that the
L\'evy measure of $X$ has a density $y\mapsto j(\abs{y})$ where $j:(0, \infty)\to (0, \infty)$ is given by
\begin{equation}\label{E2.3}
j(r) = \int_0^\infty (4\pi t)^{-d/2} e^{-\frac{r^2}{4t}} \, \mu(\D{t}),
\end{equation}
and 
\begin{equation}\label{E2.4}
\Psi(\abs{z}^2)= \int_{\Rd\setminus\{0\}} (1-\cos(y\cdot z)) j(\abs{y}) \, \D{y}.
\end{equation}
We would be interested in the following class of Bernstein functions.
\begin{definition}
\label{WLSC}
The function $\Psi$ is said to satisfy a
\begin{enumerate}
\item[(i)]
weak lower scaling (WLSC) property with parameters ${\underline\mu} > 0$, $\underline{c} \in(0, 1]$ and
$\underline{\theta}\geq 0$, if
$$
\Psi(\gamma u) \;\geq\; \underline{c}\, \gamma^{\underline\mu} \Psi(u), \quad u>\underline{\theta}, \; \gamma\geq 1.
$$
\item[(ii)]
weak upper scaling (WUSC) property with parameters $\bar\mu > 0$,
$\bar{c} \in[1, \infty)$ and $\bar{\theta}\geq 0$, if
$$
\Psi(\gamma u) \;\leq\; \bar{c}\, \gamma^{\bar{\mu}} \Psi(u), \quad u>\bar{\theta}, \; \gamma\geq 1.
$$
\end{enumerate}
\end{definition}

\begin{example}\label{Eg2.1}
Some important examples of $\Psi$ satisfying WLSC and WUSC include the following
cases with the given parameters, respectively:
\begin{itemize}
\item[(i)]
$\Psi(u)=u^{\alpha/2}, \, \alpha\in(0, 2]$, with ${\underline\mu} = \frac{\alpha}{2}$, $\underline{\theta}=0$,
and $\bar\mu = \frac{\alpha}{2}$, $\bar{\theta}=0$.
\item[(ii)]
$\Psi(u)=(u+m^{2/\alpha})^{\alpha/2}-m$, $m> 0$, $\alpha\in (0, 2)$, with ${\underline\mu} = \frac{\alpha}{2}$,
$\underline{\theta}=0$ and $\bar\mu = 1$, $\bar{\theta}=0$.
\item[(iii)]
$\Psi(u)=u^{\alpha/2} + u^{\beta/2}, \, \alpha, \beta \in(0, 2]$, with ${\underline\mu} = \frac{\alpha}{2}
\wedge \frac{\beta}{2}$, $\underline{\theta}=0$ and $\bar\mu = \frac{\alpha}{2} \vee \frac{\beta}{2}$,
$\bar{\theta}=0$.
\item[(iv)]
$\Psi(u)=u^{\alpha/2}(\log(1+u))^{-\beta/2}$, $\alpha \in (0,2]$, $\beta \in [0,\alpha)$ with
${\underline\mu}=\frac{\alpha-\beta}{2}$, $\underline{\theta}=0$ and $\bar\mu=\frac{\alpha}{2}$, $\bar{\theta}=0$.
\item[(v)]
$\Psi(u)=u^{\alpha/2}(\log(1+u))^{\beta/2}$, $\alpha \in (0,2)$, $\beta \in (0, 2-\alpha)$, with
${\underline\mu}=\frac{\alpha}{2}$, $\underline{\theta}=0$ and $\bar\mu=\frac{\alpha+\beta}{2}$, $\bar{\theta}=0$.
\end{itemize}
\end{example}
The following condition will be imposed on $\pro X$.
\begin{assumption}\label{A2.1}
$\Psi$ satisfies both WLSC and WUSC properties with respect to some parameters $(\underline{\mu}, \underline{c}, \underline{\theta})$ and 
$(\bar{\mu}, \bar{c}, \bar{\theta})$, respectively. Moreover, for some positive constant $\varrho$ we have 
\begin{equation}\label{A2.1A}
j(r+1)\geq \varrho\, j(r),\quad \text{for all}\; \; r\geq 1,
\end{equation}
 where $j$ is given by
\eqref{E2.3}.
\end{assumption}
It is obvious that $\bar{\mu}\geq \underline{\mu}$. If $\Psi$ is complete Bernstein and satisfies for some $\alpha\in(0, 1)$ that $\Psi(r)\asymp r^{\alpha}\ell(r)$, as $r\to\infty$, for some locally bounded and slowly varying function $\ell$, then \eqref{A2.1A} holds \cite[Theorem 13.3.5]{KSV}. 
Many results of this article would be valid without Assumption~\ref{A2.1}. However, to establish compactness of certain operators 
(see Theorem~\ref{T2.1} or Lemma~\ref{L3.7} below)
we use some estimates from \cite{KKLL} which uses Assumption~\ref{A2.1}.

For our analysis we also require the renewal function $V$ of the properly normalized ascending ladder-height process of $X^{(1)}_t$, where $X^{(1)}_t$ denotes the 
first coordinate of $X_t$. The ladder-height process is a subordinator with Laplace exponent 
$$\tilde\Psi(\xi)=\exp\left\{\frac{1}{\pi}\int_0^\infty \frac{\log \Psi(\xi\zeta)}{1+\zeta^2}\, \D{\zeta}\right\}, \quad \xi\geq 0\,,$$
and $V(x)$ is its potential measure of the half-line $(-\infty, x)$. The Laplace transform of $V$ is given by 
$$\int_0^\infty V(x) e^{-\xi x}\, \D{x}= \frac{1}{\xi \tilde\Psi(\xi)}, \quad \xi>0.$$
It is also known that $V=0$ for $x\leq 0$,$V$ is continuous and strictly increasing in $(0, \infty)$ and $V(\infty)=\infty$ (see \cite{F74} for more details).
From \cite[Lemma~1.2]{BGR14} it is known that
for some universal constant $C$, dependent only on the dimension $d$, we have
\begin{equation}\label{E2.6}
C^{-1} \,{\Psi(r^{-2})}\leq \frac{1}{V^2(r)}\leq C\, {\Psi(r^{-2})}\quad r>0.
\end{equation}

\subsection{Main results}
Let $\cD$ be a $\cC^{1, 1}$ open bounded set. By $\uptau$ we denote the exit time of $\pro X$ from $\cD$.
Given a function $U\in\cC(\bar\cD)$ called potential, the corresponding Feynman-Kac
semigroup is given by
\begin{equation}\label{FKsemi}
T^{\cD, U}_t f(x)= \Exp^x\left[e^{-\int_0^t U(X_s)\, \D{s}} f(X_t)\Ind_{\{t<\uptau\}}\right], \quad t>0,
\; x\in\cD, \; f\in L^2(\cD)\,.
\end{equation}
It is shown in \cite[Lem~3.1]{BL17a} that
$T^{\cD, U}_t$, $t > 0$, is a Hilbert-Schmidt operator on $L^2(\cD)$ with continuous integral kernel in
$(0, \infty)\times\cD\times\cD$. Moreover, every operator $T_t$ has the same purely discrete spectrum, independent
of $t$, whose lowest eigenvalue is the principal eigenvalue $\lambda^*$ having multiplicity one, and the corresponding
principal eigenfunction $\Phi \in L^2(\cD)$ is strictly positive in $\cD$. Since the boundary of $\cD$ is regular by \cite[proof of Lemma 2.9]{BGR15}
 we also have from \cite[Lem.~3.1]{BL17a} that $\Phi\in\cC_0(\cD)$, where $\cC_0(\cD)$ denotes the class of continuous functions on $\Rd$ vanishing in $\cD^c$.
 Since $\Phi$
is an eigenfunction in semigroup sense, we have for all $t>0$ that
\begin{equation}\label{E2.7}
e^{-\lambda^* t} \Phi(x) = T^{\cD, U}_t \Psi(x) = \Exp^x\left[e^{-\int_0^t U(X_s)\, \D{s}} \Phi(X_t)\Ind_{\{t<\uptau\}}\right],
\quad x\in\cD.
\end{equation}
Moreover, $\lambda$ in \eqref{E2.7} is an eigenvalue of the operator $\Psidel + U$ with Dirichlet exterior condition. By $\lambda^*_U$ we denote the principal
eigenvalue corresponding to the potential $U$ and $\lambda^*=\lambda^*_0$. Let $\Phi_1\in\cC_0(\cD)$ be the positive eigenfunction corresponding to the eigenvalue
$\lambda^*$. We normalize $\Phi_1$ to satisfy $\norm{\Phi_1}_\infty=1$.
In this paper we are interested in the existence and multiplicity
of solutions of
\begin{equation}
\label{E-AP}
\left\{\begin{array}{ll}
\Psidel u = f(x, u) + \rho\, \Phi_1 + h(x) \quad \text{in}\, \; \cD, \\
\quad \quad \;\;\; u = 0\, \qquad\qquad\qquad\qquad \quad\, \text{in}\;\; \cD^c,
\tag{$P_\rho $}
\end{array} \right.
\end{equation}
where $h\in\cC(\bar\cD)$ and $f$ is continuous function satisfying some appropriate condition. In what follows by a solution of 
\begin{equation}\label{E2.8}
\left\{\begin{array}{ll}
\Psidel u  = g  \quad \text{in}\, \; \cD, \\
\quad \quad \;\;\; u = 0\, \quad \, \text{in}\;\; \cD^c,
\end{array} \right.
\end{equation}
for $g\in\cC(\bar\cD)$ we mean semigroup or potential theoretic solution. More precisely, the solution of \eqref{E2.8} is given by
$$u(x) = \int_{\cD} g(y) G^\cD(x, y)\, \D{y}= \Exp^x\left[\int_0^{\uptau} g(X_s)\, \D{s}\right],$$
where $G^\cD$ denotes the Green function of $(X^D_t)_{t\geq 0}$, the killed process of $X$ upon $D$. From the strong Markov property it is easily seen that
\begin{equation}\label{E2.9}
u(x)= \Exp^x\left[\int_0^{t\wedge \uptau} g(X_s)\, \D{s}\right] + \Exp^x[u(X_{t\wedge\uptau})]\quad t\geq 0.
\end{equation}
It can also be shown that the solution of \eqref{E2.8} is also a viscosity solution of \eqref{E2.8} (see \cite{KKLL}). 

Our first result concerns with the existence of solution.
\begin{theorem}\label{T2.1}
Suppose that Assumption~\ref{A2.1} holds. Let $U, g\in \cC(\bar\cD)$ and $\lambda^*_U>0$. Then there exists a unique $u\in \cC_0(\cD)$ satisfying
\begin{equation}\label{ET2.1A}
\Psidel u + U u = g \;\; \mbox{in}\;\; \cD,  \quad u=0 \;\; \mbox{in}\;\; \cD^c.
\end{equation}
\end{theorem}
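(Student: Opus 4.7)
The natural candidate is the Feynman-Kac resolvent
\[
u(x) \df \int_0^\infty T^{\cD,U}_t g(x)\, \D{t} \;=\; \Exp^x\!\left[\int_0^\uptau e^{-\int_0^t U(X_s)\,\D{s}}\, g(X_t)\, \D{t}\right].
\]
The plan is to (i) use the positivity $\lamstr_U>0$ to force the integral to converge uniformly in $x$, (ii) use the continuous kernel of $T^{\cD,U}_t$ from \cite{BL17a} together with the boundary regularity of $\cD$ to put $u$ in $\cC_0(\cD)$, (iii) identify $u$ as a semigroup/potential-theoretic solution of \eqref{ET2.1A} through the strong Markov property, and (iv) obtain uniqueness from the fact that $0$ cannot be an eigenvalue when $\lamstr_U>0$.

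\textbf{Step 1 (pointwise exponential decay).} I would first upgrade the $L^2$ spectral-gap bound $\|T^{\cD,U}_t\|_{L^2\to L^2}\le e^{-\lamstr_U t}$ to a uniform pointwise estimate of the form $T^{\cD,U}_t\mathbf{1}(x)\le C\, e^{-\lamstr_U t}$ for $t\ge 1$. The Hilbert-Schmidt property and the continuous integral kernel from \cite[Lem.~3.1]{BL17a} give an $L^2\to L^\infty$ bound for $T^{\cD,U}_1$; writing $T^{\cD,U}_{t+1}=T^{\cD,U}_1 T^{\cD,U}_t$ and inserting the $L^2$ decay produces the desired pointwise bound. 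Multiplying by $\|g\|_\infty$ and integrating in $t$ gives $\|u\|_\infty\le C\|g\|_\infty/\lamstr_U<\infty$.

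\textbf{Step 2 (membership in $\cC_0(\cD)$).} For each fixed $t>0$, the continuous kernel of $T^{\cD,U}_t$ shows that $T^{\cD,U}_t g$ is continuous inside $\cD$, and regularity of $\partial\cD$ (which is $\cC^{1,1}$) together with the estimates of \cite{KKLL, BGR15} ensure $\Prob^x(\uptau>\varepsilon)\to 0$ as $x\to\partial\cD$, so $T^{\cD,U}_t g\in\cC_0(\cD)$. The uniform decay of Step~1 makes the $t$-integral defining $u$ converge in sup-norm, and $\cC_0(\cD)$ is closed under uniform convergence, so $u\in\cC_0(\cD)$.

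\textbf{Step 3 (equation) and Step 4 (uniqueness).} The strong Markov property, applied together with the multiplicative functional $e^{-\int_0^\cdot U(X_r)\,\D{r}}$, gives for every $t\ge 0$ the identity
\[
u(x)=\Exp^x\!\left[\int_0^{t\wedge\uptau} e^{-\int_0^s U(X_r)\,\D{r}} g(X_s)\,\D{s}\right]+\Exp^x\!\left[e^{-\int_0^{t\wedge\uptau} U(X_s)\,\D{s}}\, u(X_{t\wedge\uptau})\right],
\]
which is the semigroup analogue of \eqref{E2.9} and is the definition of semigroup/potential-theoretic solution of \eqref{ET2.1A} with potential $U$. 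For uniqueness, if $u_1,u_2\in\cC_0(\cD)$ both solve \eqref{ET2.1A}, then $w=u_1-u_2$ satisfies the homogeneous equation, so $w$ is an eigenfunction of $\Psidel+U$ with Dirichlet exterior condition at eigenvalue $0$; since $\lamstr_U>0$ is the smallest eigenvalue, $0$ is not in the spectrum and $w\equiv 0$. Alternatively one can apply the refined maximum principle to $\pm w$.

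\textbf{Main obstacle.} The genuinely delicate ingredient is Step~1: turning the $L^2$ spectral gap into a uniform pointwise exponential decay of $T^{\cD,U}_t\mathbf{1}$ up to the boundary. This is where Assumption~\ref{A2.1} enters, through the heat-kernel/Green-function estimates of \cite{BGR14, CKS, KKLL} that underpin the Hilbert-Schmidt and continuity statements of \cite[Lem.~3.1]{BL17a}. Once that quantitative decay is secured, Steps~2-4 are routine probabilistic bookkeeping.
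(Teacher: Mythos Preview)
Your approach is correct but takes a genuinely different route from the paper. The paper proves existence by Schaefer's fixed point theorem: it sets up the map $\cT\psi=\varphi$ solving $\Psidel\varphi=g-U\psi$, establishes compactness of $\cT$ via the regularity estimate of \cite{KKLL} (this is precisely where Assumption~\ref{A2.1} enters in the paper), and then shows the Schaefer set $\{\varphi=\mu\cT\varphi:\mu\in[0,1]\}$ is bounded. For that last bound the paper uses Lemma~\ref{L3.1} to rewrite $\varphi$ in Feynman--Kac form, invokes the pointwise decay $\Exp^x\bigl[e^{-\int_0^t\mu U}\Ind_{\{\uptau>t\}}\bigr]\le c_3 e^{-\delta t}$ from \cite[Cor.~4.1]{BL17b} together with concavity of $\mu\mapsto\lambda^*_{\mu U}$, and lets $t\to\infty$. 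So the paper in fact obtains your resolvent formula \emph{a posteriori}, after existence is already secured.

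You instead take the resolvent as the definition of $u$ and verify it works, which is more direct and avoids both the fixed point machinery and the compactness input from \cite{KKLL}. One point to tighten: your Step~3 identity is the Feynman--Kac dynamic relation for the semigroup $T^{\cD,U}_t$, but the paper's notion of ``solution'' to \eqref{ET2.1A} is that $u$ be a Green potential for the \emph{unperturbed} killed process, namely $u(x)=\int_\cD G^\cD(x,y)\bigl(g(y)-U(y)u(y)\bigr)\,\D{y}$. To close this gap you need either the resolvent/Duhamel identity $G^{\cD,U}=G^\cD-G^\cD\, U\, G^{\cD,U}$ (standard for bounded potentials), or equivalently to run the computation of Lemma~\ref{L3.1} in reverse to pass from your exponential-weighted identity back to \eqref{E2.9} with right-hand side $g-Uu$. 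This is routine, but it is the step that actually links your construction to the paper's definition.

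What each approach buys: the paper's route produces, as a byproduct, the $\cC^{\bar\phi}$-regularity and compactness of the solution operator, both of which are reused in the degree-theoretic arguments later (Lemma~\ref{L3.7}). Your route gives the explicit probabilistic representation immediately and relies only on the heat-kernel bounds behind \cite[Lem.~3.1]{BL17a} rather than on the boundary regularity theorem of \cite{KKLL}; in that sense the existence part of your argument uses Assumption~\ref{A2.1} more lightly than the paper does.
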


We also need the following refined  maximum principle.
\begin{theorem}\label{T2.2}
Let Assumption~\ref{A2.1} hold.
Suppose that $U\in\cC(\bar\cD)$ and $\lambda^*_U>0$. Let $u\in\cC_{\rm b}(\Rd)$ be a viscosity solution of
$\Psidel u + U u=g_1$ and $v\in\cC_{\rm b}(\Rd)$ be a viscosity solution of  $\Psidel v + U v=g_2$ in $\cD$ for some
$g_1, g_2\in\cC(\bar\cD)$ with $g_1\leq g_2$. Furthermore, assume that $u=v=0$ in $\cD^c$. Then we have either  $u< v$ in $\cD$ or $u=v$ in $\Rd$.
\end{theorem}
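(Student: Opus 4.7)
The plan is to set $w \df v - u$; by linearity of $\Psidel + U$ on $\cC_{\rm b}(\Rd)$, this function is a bounded continuous viscosity solution of $\Psidel w + Uw = g_2 - g_1 \geq 0$ in $\cD$, with $w = 0$ in $\cD^c$. I would first establish $w \geq 0$ on $\Rd$, and then deduce the dichotomy from a nonlocal strong minimum principle: if $w(x_0) = 0$ at any point $x_0 \in \cD$, then $w \equiv 0$.

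For the nonnegativity, I would argue by contradiction using the positive principal eigenfunction $\Phi_1 \in \cC_0(\cD)$, which solves $\Psidel \Phi_1 + U\Phi_1 = \lambda^*_U \Phi_1$. If $w$ is negative somewhere in $\cD$, then since $w$ and $\Phi_1$ vanish at the comparable rate $V(\dist(x,\partial\cD))$ near $\partial\cD$ (by the boundary estimates of \cite{KKLL}), the ratio $-w/\Phi_1$ remains bounded near the boundary, whence
\begin{equation*}
m \df \sup_{x\in\cD} \frac{-w(x)}{\Phi_1(x)}
\end{equation*}
is finite, strictly positive, and attained at some interior point $x^* \in \cD$. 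The function $Z \df w + m\Phi_1$ then lies in $\cC_0(\cD)$, is nonnegative on $\Rd$, vanishes at $x^*$, and, by linearity, is a viscosity solution of $\Psidel Z + UZ = (g_2 - g_1) + m\lambda^*_U\Phi_1$ in $\cD$. Taking the constant $\varphi \equiv 0$ as a test function that touches $Z$ from below at $x^*$, and mixing $\varphi$ inside a small ball $B_\delta(x^*)$ with $Z$ outside (in the Caffarelli--Silvestre sense), the nonlocal evaluation reduces to
\begin{equation*}
-\int_{\Rd\setminus B_\delta(x^*)} Z(y)\, j(|x^* - y|)\, \D{y} \;\leq\; 0,
\end{equation*}
while the viscosity supersolution inequality forces this quantity to be at least $m\lambda^*_U\Phi_1(x^*) > 0$, a contradiction.

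For the dichotomy, assume $w \geq 0$ on $\Rd$ with $w(x_0) = 0$ at some $x_0 \in \cD$. Applying the same constant test function $\varphi \equiv 0$ at $x_0$ to the equation for $w$ gives
\begin{equation*}
0 \;\geq\; -\int_{\Rd} w(y)\, j(|x_0 - y|)\, \D{y} \;\geq\; (g_2 - g_1)(x_0) \;\geq\; 0,
\end{equation*}
so $\int w(y)\, j(|x_0 - y|)\, \D{y} = 0$. Since $j > 0$ on $(0, \infty)$ by \eqref{E2.3} and $w \geq 0$ is continuous, this forces $w \equiv 0$ on $\Rd$, yielding the claimed dichotomy together with Step~1.

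The principal obstacle lies in Step~1, namely in guaranteeing that the supremum $m$ is actually attained at an interior point of $\cD$ rather than approached only along sequences tending to $\partial\cD$; this is precisely where the matching boundary decay rates for the viscosity solutions $u, v$ and the eigenfunction $\Phi_1$, as provided by \cite{KKLL}, play an essential role. A secondary technical point is the careful justification that $w$ and $Z$ inherit viscosity-solution status from $u$, $v$ and $\Phi_1$ under the linear combinations used, along with the rigorous mixed-test evaluation of $\Psidel$ at the contact point $x^*$.
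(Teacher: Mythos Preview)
Your approach is a genuine departure from the paper's. The paper proceeds entirely through probabilistic machinery: it applies Lemma~\ref{L3.1} to pass to the Feynman--Kac representations of $u$ and $v$, subtracts to obtain $w(x) \geq \Exp^x\bigl[e^{-\int_0^t U(X_s)\,\D{s}} w(X_t)\Ind_{\{t<\uptau\}}\bigr]$ for all $t\geq 0$, and then invokes \cite[Theorem~4.2]{BL17b}, which delivers the dichotomy ``$w>0$ in $\cD$ or $w\equiv 0$'' directly from this inequality together with $\lambda^*_U>0$. No eigenfunction barrier, no boundary regularity from \cite{KKLL}, and no viscosity test functions enter; the theorem is essentially a two-line corollary of tools already assembled. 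Your Berestycki--Nirenberg--Varadhan style eigenfunction comparison is the standard PDE-side route to refined maximum principles and has the merit of staying within the viscosity framework without appealing to the external reference \cite{BL17b}, but it costs considerably more. In particular, the attainment of $m$ at an interior point (your stated principal obstacle) is not settled by matching decay rates alone: if the supremum of $-w/\Phi_1$ is attained only on $\partial\cD$, you need a Hopf-type lower bound $Z \geq c\, V(\delta_\cD)$ to derive a contradiction, and the paper obtains such bounds only through the heat-kernel estimates \eqref{EL3.2C1}--\eqref{EL3.2C2} (see the argument below \eqref{EL3.7B}). The linearity issue you flag is likewise not free, since viscosity solutions are not closed under addition; resolving it cleanly would probably force you through the equivalence with semigroup solutions anyway, at which point the paper's route is shorter.
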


We impose the following Ambrosetti-Prodi type condition on $f$.
\begin{assumptn}\label{Assump-AP}
Let $f:\bar\cD\times\RR\to\RR$ be such that
\begin{enumerate}
\item[(1)]
both $f(x, u)$ and $D_uf(x, u)$ are continuous in $(x, u)\in \bar\cD\times\RR$;
\medskip
\item[(2)]
there exist $U_1, U_2\in\cC(\bar\cD)$ with $U_1\geq U_2$ such that
\begin{align}
\lambda^*_{U_1}&>0 \quad\text{and}\quad \lambda^*_{U_2}<0\,, \label{AP1}
\\[2mm]
f(x, q) &\geq -U_1(x)q-C\quad \text{for all}\; q\leq 0, \; x\in\bar\cD\,, \label{AP2}
\\[2mm]
f(x, q) &\geq -U_2(x)q-C\quad \text{for all}\; q\geq 0, \; x\in\bar\cD\,, \label{AP3}
\end{align}
\item[(3)]
$f$ has at most linear growth, i.e., there exists a constant $C > 0$ such that
$$
\abs{f(x,q)} \leq C(1+\abs{q}),
$$
for all $(x,q)\in\bar\cD\times\RR$.
\end{enumerate}
\end{assumptn}
In what follows, we assume with no loss of generality that $f(x,0)=0$, otherwise $h$ can be replaced by $h-f(\cdot,0)$.
The condition $U_1\geq U_2$ is imposed for some technical reason. As well known this condition is not required when
$\Psi(r)=r^s$ for $s\in(0, 1] $ (see \cite{BL18} and references therein). It should be observed that due to our Assumption [AP](2)
we have $f(x, q)\geq -U_1(x) q - C$ for $q\in\RR$.

Now we are ready to state our main result on the nonlocal Ambrosetti-Prodi problem.
\begin{theorem}\label{T-AP}
Let Assumption~\ref{A2.1} and [AP] hold. Then there exists $\rho^* = \rho^*(h) \in\RR$ such that for $\rho < \rho^*$ the
Dirichlet problem \eqref{E-AP} has at least two solutions, at least one solution for $\rho = \rho^*$, and no
solution for $\rho > \rho^*$.
\end{theorem}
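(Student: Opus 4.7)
Following the classical Ambrosetti--Prodi template \cite{BP75} and its fractional adaptations \cite{BL18,S14}, I would set
\[
\rho^*\df \sup\{\rho\in\RR : \eqref{E-AP}\ \text{is solvable}\}
\]
and organise the proof into four steps: (i) $\rho^*<\infty$; (ii) construction of a minimal solution $\underline u_\rho$ for every $\rho<\rho^*$; (iii) passage to the limit $\rho\uparrow\rho^*$ to obtain a solution at $\rho=\rho^*$; (iv) a Leray--Schauder degree argument producing a second solution when $\rho<\rho^*$. The two nonstandard analytic inputs are Theorem~\ref{T2.1} (potential-theoretic solvability whenever the principal eigenvalue is positive) and Theorem~\ref{T2.2} (refined maximum principle), which substitute for the usual Schauder theory plus Hopf lemma combination used in the local setting.

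\textbf{Steps (i)--(ii).} The algebraic observation that drives everything is that \eqref{AP2}, \eqref{AP3} and $U_1\geq U_2$ jointly yield
\[
f(x,q) + U_i(x)\,q \;\geq\; -C\qquad (i=1,2,\ q\in\RR,\ x\in\bar\cD).
\]
(For $q\geq 0$ both follow from \eqref{AP3} combined with $U_1\geq U_2$, and for $q\leq 0$ both from \eqref{AP2} combined with $(U_2-U_1)q\geq 0$.) Given a solution $u$ of \eqref{E-AP} with $\rho\leq\rho_0$, rewriting $\Psi(-\Delta)u + U_1 u \geq -C + \rho\Phi_1 + h$ and comparing via Theorem~\ref{T2.2} with the unique solution (Theorem~\ref{T2.1}) of $(\Psi(-\Delta)+U_1)w = C + \|(\rho\Phi_1+h)^-\|_\infty$ yields a one-sided bound $u\geq -C_1(\rho_0)$. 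Testing \eqref{E-AP} against the principal eigenfunction $\Phi^*_{U_2}$ and using self-adjointness of $\Psi(-\Delta)$ on $L^2(\cD)$ gives
\[
\lambda^*_{U_2}\int_\cD u\,\Phi^*_{U_2}\,\D x \;=\; \int_\cD (f(x,u)+U_2 u)\,\Phi^*_{U_2}\,\D x + \rho\int_\cD \Phi_1\Phi^*_{U_2}\,\D x + \int_\cD h\,\Phi^*_{U_2}\,\D x.
\]
Since $\lambda^*_{U_2}<0$, $u\geq -C_1$, and the first right-hand integrand is $\geq -C$, the left-hand side is bounded above independently of $u$, forcing $\rho\leq \rho^*<\infty$. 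For Step~(ii), the linear solution $\underline u$ of $(\Psi(-\Delta)+U_1)\underline u = \rho\Phi_1+h-C$ provided by Theorem~\ref{T2.1} is a subsolution of \eqref{E-AP} by the very inequality above, while a supersolution $\overline u\geq\underline u$ is furnished by Theorem~\ref{T2.1} using the linear growth of $f$; a monotone iteration $u_{n+1}=(\Psi(-\Delta)+K)^{-1}(f(\cdot,u_n)+Ku_n+\rho\Phi_1+h)$, with $K$ large enough that $s\mapsto f(x,s)+Ks$ is nondecreasing, then converges to a minimal solution $\underline u_\rho$. Since $\Phi_1\geq 0$, any solution at $\rho_2$ is automatically a supersolution at every $\rho_1<\rho_2$, so the admissible set is an interval.

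\textbf{Steps (iii)--(iv).} For Step~(iii), take $\rho_n\uparrow \rho^*$ with $u_n=\underline u_{\rho_n}$. Step~(i) gives a uniform lower bound, and a symmetric argument applied to $u_n^+$ (using the $U_2$-inequality and $|f|\leq C(1+|u|)$) gives a uniform upper bound. Compactness of $(\Psi(-\Delta))^{-1}$ as an operator on $\cC_0(\cD)$---implicit in Theorem~\ref{T2.1} and the boundary estimates of \cite{KKLL}---then extracts a convergent subsequence whose limit solves \eqref{E-AP} at $\rho=\rho^*$. For Step~(iv), fix $\rho_1<\rho^*$, pick $\tilde\rho>\rho^*$, and consider the compact map $T_\rho u\df(\Psi(-\Delta))^{-1}(f(\cdot,u)+\rho\Phi_1+h)$. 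The uniform bounds extend to $\rho\in[\rho_1,\tilde\rho]$ and allow the Leray--Schauder degree of $I-T_\rho$ to be computed on a large ball $B_R$; non-solvability at $\tilde\rho$ with homotopy invariance yields $\deg(I-T_{\rho_1},B_R,0)=0$. On the other hand, a local analysis around $\underline u_{\rho_1}$---using the fact that $\underline u_{\rho_1}-\varepsilon\Phi^*_{U_1}$ is a strict subsolution for small $\varepsilon>0$, which is available because $\lambda^*_{U_1}>0$---gives $\deg(I-T_{\rho_1},B_r(\underline u_{\rho_1}),0)=1$. Excision then forces a second zero of $I-T_{\rho_1}$ in $B_R\setminus \overline{B_r(\underline u_{\rho_1})}$.

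\textbf{Main obstacle.} The delicate part is establishing the two-sided uniform $L^\infty$-bound on solutions along compact $\rho$-intervals and the compactness of $T_\rho$ in a topology in which the degree computations are meaningful. In the classical local case this is handled by Hopf's lemma and Schauder regularity; here both are unavailable and must be replaced by the refined maximum principle (Theorem~\ref{T2.2}), the potential-theoretic solvability and compactness of the linear problem (Theorem~\ref{T2.1}), and the sharp boundary decay estimates of \cite{KKLL}, which in turn rest on Assumption~\ref{A2.1} and the heat-kernel bounds of \cite{BGR14,CKS}. Once these bounds and the compactness of $T_\rho$ are in place, the remainder of the argument follows the familiar degree-theoretic pattern from the local and fractional Laplacian cases.
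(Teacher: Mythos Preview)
Your overall architecture matches the paper's: define $\rho^*$ as a supremum, build a minimal solution by monotone iteration between sub- and super-solutions, pass to the limit at $\rho^*$, and use a degree-theoretic excision argument for the second solution. There are, however, two genuine gaps in your plan where the paper does substantially more work.

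\medskip

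\textbf{The uniform $L^\infty$ bound.} Your Step~(i) testing argument against $\Phi^*_{U_2}$ is a clean way to get $\rho^*<\infty$, and it is different from the paper's route (Lemmas~3.5--3.6). But for Steps~(iii) and~(iv) you need a \emph{uniform} bound on $\|u\|_\infty$ over $\rho$ in compact intervals, and your one-line ``symmetric argument applied to $u_n^+$'' does not deliver this. The lower bound $u\geq -C_1$ comes from Theorem~\ref{T2.2} applied with $U_1$ because $\lambda^*_{U_1}>0$; there is no analogous comparison available for the upper bound, since the relevant potential would have to be $U_2$ and $\lambda^*_{U_2}<0$ disqualifies the refined maximum principle. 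The paper instead argues by blow-up (Lemma~3.6): assuming $\|u_n\|_\infty\to\infty$, it normalises $v_n=u_n/\|u_n\|_\infty$, uses the regularity estimate of \cite{KKLL} to extract a limit $v\in\cC_0(\cD)$ with $v\geq 0$, $\|v\|_\infty=1$, and shows via the probabilistic representation (Lemma~3.1) that $v$ satisfies a supermartingale-type inequality for the semigroup with potential $U_2$, forcing $\lambda^*_{U_2}\geq 0$ and contradicting \eqref{AP1}. This contradiction argument is where the hypothesis $\lambda^*_{U_2}<0$ is actually spent, and it cannot be replaced by a direct comparison.

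\medskip

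\textbf{The degree-one computation.} Your plan to obtain $\deg(I-T_{\rho_1},B_r(\underline u_{\rho_1}),0)=1$ via ``$\underline u_{\rho_1}-\varepsilon\Phi^*_{U_1}$ is a strict subsolution'' is too schematic. In the nonlocal setting solutions behave like $V(\delta_\cD)$ at the boundary, so an open neighbourhood of the minimal solution that is separated from the sub/super-solutions must encode this boundary rate. The paper works not in $\cC_0(\cD)$ but in the weighted space $\mathfrak X=\{\psi\in\cC_0(\cD):\|\psi/V(\delta_\cD)\|_{\cC^\varepsilon(\cD)}<\infty\}$, and the open set $\cO$ in Lemma~3.7 is cut out by strict inequalities on $\psi/V(\delta_\cD)$ up to the boundary. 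Showing that the minimal solution lies in the interior of $\cO$, and that the modified map $\tilde K_\rho$ sends all of $\mathfrak X$ into $\cO$, requires a quantitative Hopf-type lower bound $w(x)\geq \kappa\,V(\delta_\cD(x))$ for nonnegative supersolutions; the paper obtains this from the heat-kernel estimates of \cite{BGR14,CKS} (inequalities \eqref{EL3.2C1}--\eqref{EL3.2C2}). Without this boundary control you cannot guarantee that $\cO$ is open in the relevant topology nor that $\tilde K_\rho$ maps into it, and the homotopy that produces $\deg=1$ breaks down.
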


\section{Proofs}
We prove Theorem~\ref{T2.1}-\ref{T-AP} in this section.
The following result would play a key role in our proofs.
\begin{lemma}\label{L3.1}
Let $u\in\cC_0(\cD)$ be a solution of 
$$\Psidel u =g\quad \text{in}\; \cD,$$
 for some $g\in\cC(\bar\cD)$. Consider $U\in\cC(\bar\cD)$. Then for any $t\geq 0$ we have
\begin{equation}\label{EL3.1A}
\Exp^x\left[e^{\int_0^t U(X_s)\, \D{s}} u(X_t)\Ind_{\{t<\uptau\}}\right]-u(x)=
\Exp^x\left[\int_0^{t\wedge\uptau} e^{\int_0^{s} U(X_p)\, \D{p}}(U(X_s)u(X_s)-g(X_s))\, \D{s}\right], \quad x\in\cD.
\end{equation}
\end{lemma}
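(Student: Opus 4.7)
The identity is a Feynman--Kac/Dynkin-type formula for the multiplicative functional $A_t\df e^{\int_0^t U(X_s)\,\D{s}}$ applied to $u$. Because $\cD$ is bounded and $U$, $g$, $u$ are continuous (hence bounded) on $\bar\cD$, every quantity appearing below will be uniformly bounded on $\{t<\uptau\}$; I therefore avoid pathwise stochastic calculus on the jump process $\pro X$ and derive the formula from the Markov property alone, combined with the elementary identity
$$
A_t \;=\; 1+\int_0^t A_s\,U(X_s)\,\D{s}.
$$

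The starting point is the representation \eqref{E2.9}. Since $u=0$ on $\cD^c$ and $X_{\uptau}\in\cD^c$ a.s.\ (by the regularity of $\partial\cD$ recalled above), we have $u(X_{t\wedge\uptau})=u(X_t)\Ind_{\{t<\uptau\}}$, so
$$
u(x)\;=\;\Exp^x\!\left[\int_0^{t\wedge\uptau}g(X_s)\,\D{s}\right]+\Exp^x\!\left[u(X_t)\Ind_{\{t<\uptau\}}\right].
$$
Applying the Markov property at a time $s\in[0,t]$ and reusing the very same identity for the shifted process on horizon $t-s$ gives, on the event $\{s<\uptau\}$, the conditional version
$$
\Exp^x\!\left[u(X_t)\Ind_{\{t<\uptau\}}\,\big|\,\cF_s\right]\;=\;u(X_s)-\Exp^x\!\left[\int_s^{t\wedge\uptau}g(X_p)\,\D{p}\,\Big|\,\cF_s\right].
$$

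Next I expand $\Exp^x[A_t\,u(X_t)\Ind_{\{t<\uptau\}}]$ by substituting the elementary identity for $A_t$ and using $\Ind_{\{t<\uptau\}}=\Ind_{\{s<\uptau\}}\Ind_{\{t<\uptau\}}$ for $s\le t$ to promote the resulting $s$-integral from $\int_0^t$ to $\int_0^{t\wedge\uptau}$ inside the indicator. Conditioning on $\cF_s$ and substituting the conditional identity above converts the inner factor $u(X_t)\Ind_{\{t<\uptau\}}$ into $u(X_s)$ minus a conditional $g$-integral. The $u(X_s)$-piece yields the term $\Exp^x[\int_0^{t\wedge\uptau}A_s\,U(X_s)\,u(X_s)\,\D{s}]$ at once; for the $g$-piece, Fubini swaps the $s$- and $p$-integrations, and a second use of the elementary identity in the form $\int_0^p A_s U(X_s)\,\D{s}=A_p-1$ collapses it to $\Exp^x[\int_0^{t\wedge\uptau}g(X_p)(A_p-1)\,\D{p}]$. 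Adding the contributions and cancelling $\Exp^x[u(X_t)\Ind_{\{t<\uptau\}}]+\Exp^x[\int_0^{t\wedge\uptau}g(X_s)\,\D{s}]=u(x)$ via the first display produces exactly \eqref{EL3.1A}. The only real care needed is the bookkeeping of the indicators $\Ind_{\{s<\uptau\}}$, $\Ind_{\{t<\uptau\}}$ and of the upper limits $t$ versus $t\wedge\uptau$; uniform boundedness rules out any integrability subtlety, and no regularity issue at jumps of $X$ ever intervenes because the identity is expressed entirely in terms of expectations rather than pathwise stochastic integrals.
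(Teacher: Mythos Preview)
Your argument is correct and follows a genuinely different route from the paper's proof. The paper defines the function
\[
\psi(t)=\Exp^x\!\left[e^{\int_0^{t\wedge\uptau}U(X_s)\,\D s}\,u(X_{t\wedge\uptau})\right],
\]
computes the one-sided derivative $\psi'_+(t)$ via an increment decomposition $\psi(t+h)-\psi(t)=A_1(h)+A_2(h)$ (splitting into the variation of the exponential weight and the variation of $u(X_{\cdot})$, the latter handled through \eqref{E2.9}), invokes \cite[Lemma~3.1]{BL17a} to verify that $\psi$ and $\psi'_+$ are continuous, and then integrates via the fundamental theorem of calculus.

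Your approach is instead purely algebraic: you feed the elementary identity $A_t=1+\int_0^t A_s U(X_s)\,\D s$ into $\Exp^x[A_t u(X_t)\Ind_{\{t<\uptau\}}]$, use the Markov property together with \eqref{E2.9} to rewrite the conditional expectation $\Exp^x[u(X_t)\Ind_{\{t<\uptau\}}\mid\cF_s]$ on $\{s<\uptau\}$, swap the $s$- and $p$-integrals by Fubini, and collapse the inner integral via $A_p-1=\int_0^p A_s U(X_s)\,\D s$. The advantage of your route is that it never requires differentiability of $t\mapsto\psi(t)$, and in particular avoids the external continuity input from \cite{BL17a}; everything follows from boundedness, \eqref{E2.9}, Markov, and Fubini. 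The paper's approach, on the other hand, is perhaps more transparent as a ``product rule'' computation and makes the structure $\psi'_+(t)=\Exp^x[e^{\int_0^t U}\,(Uu-g)(X_t)\Ind_{\{t<\uptau\}}]$ explicit, which can be useful elsewhere.
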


\begin{proof}
Define
$$\psi(t) = \Exp^x\left[e^{\int_0^{t\wedge\uptau} U(X_s)\, \D{s}} u(X_{t\wedge\uptau})\right]
=\Exp^x\left[e^{\int_0^t U(X_s)\, \D{s}} u(X_t)\Ind_{\{t<\uptau\}}\right].$$
From \cite[Lemma 3.1]{BL17a} it follows that $\psi$ is continuous in $[0, \infty)$. We fix $t\geq 0$ and consider $h>0$.
Then
\begin{align}\label{EL3.1B}
\psi(t+h)-\psi(t) & = \Exp^x\left[e^{\int_0^{(t+h)\wedge\uptau} U(X_s)\, \D{s}} u(X_{(t+h)\wedge\uptau})\right]
-\Exp^x\left[e^{\int_0^{t\wedge\uptau} U(X_s)\, \D{s}} u(X_{t\wedge\uptau})\right]\nonumber
\\
&= \Exp^x\left[\left(e^{\int_0^{(t+h)\wedge\uptau} U(X_s)\, \D{s}}- e^{\int_0^{t\wedge\uptau} U(X_s)\, \D{s}}\right) u(X_{(t+h)\wedge\uptau})\right]\nonumber
\\
&\ \qquad + \Exp^x\left[e^{\int_0^{t\wedge\uptau} U(X_s)\, \D{s}} \left(u(X_{(t+h)\wedge\uptau})- u(X_{t\wedge\uptau})\right)\right]\nonumber
\\
&= \underbrace{\Exp^x\left[e^{\int_0^{t\wedge\uptau} U(X_s)\, \D{s}} 
\left(e^{\int_{t\wedge\uptau}^{(t+h)\wedge\uptau} U(X_s)\, \D{s}}- 1\right) u(X_{(t+h)\wedge\uptau})\right]}_{A_1(h)}\nonumber
\\
&\ \qquad +  \underbrace{\Exp^x\left[e^{\int_0^{t\wedge\uptau} U(X_s)\, \D{s}} 
\left(\Exp^{X_{t\wedge\uptau}}[u(X_{h\wedge\uptau})]- u(X_{t\wedge\uptau})\right)\right]}_{A_2(h)},
\end{align}
where in the last line we used strong Markov property. Since $u(X_{(t+h)\wedge\uptau})=0$ on $\{t\geq \uptau\}$, it follows that
$$A_1(h)= \Exp^x\left[e^{\int_0^{t} U(X_s)\, \D{s}} 
\left(e^{\int_{t\wedge\uptau}^{(t+h)\wedge\uptau} U(X_s)\, \D{s}}- 1\right) u(X_{(t+h)\wedge\uptau})\Ind_{\{t<\uptau\}}\right],$$
and therefore, applying dominated convergence theorem we obtain
\begin{equation}\label{EL3.1C}
\lim_{h\to 0} \frac{A_1(h)}{h}= \Exp^x\left[e^{\int_0^{t} U(X_s)\, \D{s}} 
 U(X_t) u(X_{t}) \Ind_{\{t<\uptau\}}\right].
\end{equation}
From \eqref{E2.9} we get that
$$\Exp^{X_{t\wedge\uptau}}[u(X_{h\wedge\uptau})]- u(X_{t\wedge\uptau})=-\Exp^{X_{t\wedge\uptau}}\left[\int_0^{h\wedge \uptau} g(X_s)\,\D{s}\right],$$
since both the sides vanishes on the set $\{t\geq \uptau\}$. Thus again applying dominated convergence theorem we find 
\begin{equation}\label{EL3.1D}
\lim_{h\to 0} \frac{A_2(h)}{h}= -\Exp^x\left[e^{\int_0^{t} U(X_s)\, \D{s}} 
 g(X_{t}) \Ind_{\{t<\uptau\}}\right].
\end{equation}
Hence using \eqref{EL3.1B}, \eqref{EL3.1C} and \eqref{EL3.1D} we obtain
$$\psi^\prime_+(t)= \Exp^x\left[e^{\int_0^{t} U(X_s)\, \D{s}} 
 \left(U(X_t) u(X_{t})-g(X_t)\right) \Ind_{\{t<\uptau\}}\right].$$
It also follows from \cite[Lemma~3.1]{BL17a} that $t\mapsto \psi^\prime_+(t)$ is continuous. Hence $\psi$ is in $\cC^1(0,\infty)$ and by fundamental theorem
of calculus we have
\begin{align*}
\psi(t)- u(x)=\psi(t)- \psi(0) &= \int_0^t \Exp^x\left[e^{\int_0^{s} U(X_p)\, \D{p}} 
 \left(U(X_s) u(X_{s})-g(X_s)\right) \Ind_{\{s<\uptau\}}\right]\, \D{s}
\\
&= \Exp^x\left[\int_0^{t\wedge\uptau} e^{\int_0^{s} U(X_p)\, \D{p}}(U(X_s)u(X_s)-g(X_s))\, \D{s}\right].
\end{align*}
This proves \eqref{EL3.1A}.
\end{proof}

Let us now prove Theorem~\ref{T2.1}.
\begin{proof}[Proof of Theorem~\ref{T2.1}]
The main idea in proving \eqref{ET2.1A} is to use Schauder's fixed point theorem. Consider a map
$\cT:\cC_0(\cD)\to\cC_0(\cD)$ defined such that for every $\psi\in\cC_0(\cD)$, $\cT\psi=\varphi$ is the unique 
solution of
\begin{equation}\label{ET2.1B}
\Psidel\varphi =g-U\psi\quad \text{in}\; \cD, \quad \text{and}\quad \varphi=0\quad \text{in}\; \cD^c.
\end{equation}
Denoting $\bar\phi=[\Psi(r^{-2})]^{-\frac{1}{2}}$ and using \cite[Theorem~1.1]{KKLL} we obtain that
\begin{equation}\label{ET2.1C}
\norm{\cT\psi}_{\cC^{\bar\phi}(\cD)}\leq c_1 (\norm{g}_\infty + \norm{U\psi}_\infty),
\end{equation}
for a constant $c_1 = c_1(\cD, d, s)$ where
\begin{equation}\label{phi-norm}
\norm{h}_{\cC^{\bar\phi}(\cD)}=\norm{h}_{L^\infty(\cD)} + \sup_{x\neq y, x, y\in\cD}\frac{\abs{h(x)-h(y)}}{\bar\phi(x-y)}.
\end{equation}
Thus using \eqref{E2.6} and \eqref{ET2.1C} we have
$$\abs{\cT\psi(x)-\cT\psi(y)}\leq c_2 (\norm{g}_\infty + \norm{U\psi}_\infty) V(\abs{x-y}).$$
This implies that $\cT$ is a compact linear operator. It is also easy to see that
$\cT$ is continuous.

In a next step we show that the set
$$
\sB= \big\{\varphi\in\cC_0(\cD)\; :\; \varphi=\mu\cT\varphi \;\; \text{for some}\;\; \mu\in[0, 1]\big\}
$$
is bounded in $\cC_0(\cD)$. For every $\varphi\in\sB$ we have
\begin{equation}\label{ET2.1D}
\Psidel\varphi = \mu g - \mu U\varphi\quad \text{in}\; \cD, \quad \text{and}\quad \varphi=0\quad \text{in}\; \cD^c,
\end{equation}
for some $\mu\in[0,1]$. 
From \eqref{ET2.1D} and Lemma~\ref{L3.1} we see that 
\begin{equation}\label{ET2.1E}
\varphi(x) = \Exp^x\left[e^{-\int_0^t \mu U(X_s)\, \D{s}} \varphi(X_t)\Ind_{\{t<\uptau\}}\right] + \mu \Exp^x\left[\int_0^{t\wedge\uptau} e^{-\int_0^{s} \mu U(X_p)\, \D{p}}g(X_s)\, \D{s}\right], \quad t\geq 0.
\end{equation}
To show boundedness of $\sB$
it suffices to show that for a constant $c_2$, independent of $\mu$, we have
\begin{equation}\label{ET2.1G}
\sup_{x\in\bar\cD}\abs{\varphi(x)}\leq c_2\, \sup_{x\in\bar\cD}\abs{g(x)}.
\end{equation}
Once \eqref{ET2.1G} is established, the existence of a fixed point of $\cT$ follows by Schauder's fixed point theorem.
Since every solution of \eqref{ET2.1B} is a semigroup solution and $\lambda^*>0$, the uniqueness of the solution follows
from \cite[Th.~4.2]{BL17b} and Lemma~\ref{L3.1}. To obtain \eqref{ET2.1G} recall from \cite[Cor.~4.1]{BL17b} that
\begin{equation}\label{ET2.1H}
\lambda^*_{\mu V}=
-\lim_{t\to\infty} \frac{1}{t} \log \Exp^x\left[e^{-\int_0^t \mu U(X_s)\, \D{s}}\Ind_{\{\uptau>t\}}\right], \quad x\in\cD\,.
\end{equation}
Recall that $\lambda^*>0$ is the principal eigenvalue corresponding to the potential $U=0$. Then from the concavity of the map 
$\mu\mapsto \lambda^*_{\mu U}$ (see \cite[Lem.~4.3]{BL17b}) it follows that
$$
\lambda^*_{\mu U}\geq \lambda^*_U \wedge \lambda^*_0 = 2\delta>0.
$$
Hence by using \eqref{ET2.1H} and the continuity of $\mu\mapsto\lambda^*_{\mu U}$, we find constants $c_3>0, \mu_0>1$, such
that for every $\mu\in[0, \mu_0]$ we have
\begin{equation}\label{ET2.1I}
\Exp^x\left[e^{-\int_0^t \mu U(X_s)\, \D{s}}\Ind_{\{\uptau>t\}}\right]\leq c_3 e^{-\delta t},\quad t\geq 0, \; x\in\cD.
\end{equation}
We rewrite \eqref{ET2.1E} as
$$
\varphi(x) = \Exp^x\left[ e^{-\int_0^t \mu U(X_s)\, \D{s}} \varphi(X_t)\Ind_{\{\uptau>t\}}\right] +
\int_0^t T^{\cD, \mu U}_s g (x) \D{s},
$$
where $T^{\cD, \mu U}$ is given by \eqref{FKsemi}.
Letting $t\to\infty$, using \eqref{ET2.1I} and H\"{o}lder inequality, it is easily seen that the first term at the right hand
side of the above vanishes. Again by \eqref{ET2.1I}, we have for $x\in\cD$
$$
\Big|T^{\cD, \mu V}_s g (x)\Big| \leq c_3 \sup_{x\in\bar\cD}\abs{g}\, e^{-\delta s}, \quad s\geq 0.
$$
Thus finally we obtain
$$
\sup_{x\in\bar\cD}\abs{\varphi(x)}\leq\; \frac{c_3}{\delta} \sup_{x\in\bar\cD}\abs{g(x)},
$$
yielding \eqref{ET2.1G}.
\end{proof}

Next we prove the comparison result Theorem~\ref{T2.2}.
\begin{proof}[Proof of Theorem~\ref{T2.2}]
Using Lemma~\ref{L3.1} we see that
\begin{equation*}
u(x) = \Exp^x\left[e^{-\int_0^t U(X_s)\, \D{s}} u(X_t)\Ind_{\{t<\uptau\}}\right] + \Exp^x\left[\int_0^{t\wedge\uptau} e^{-\int_0^{s} U(X_p)\, \D{p}}g_1(X_s)\, \D{s}\right], 
\quad t\geq 0,
\end{equation*}
and,
\begin{equation*}
v(x) = \Exp^x\left[e^{-\int_0^t U(X_s)\, \D{s}} v(X_t)\Ind_{\{t<\uptau\}}\right] + \Exp^x\left[\int_0^{t\wedge\uptau} e^{-\int_0^{s} U(X_p)\, \D{p}}g_2(X_s)\, \D{s}\right], 
\quad t\geq 0.
\end{equation*}
Denoting $w=v-u$ and using the above expressions we obtain
\begin{equation*}
w(x) \geq \Exp^x\left[e^{-\int_0^t U(X_s)\, \D{s}} w(X_t)\Ind_{\{t<\uptau\}}\right], \quad t\geq 0.
\end{equation*}
From \cite[Theorem~4.2]{BL17b} it then follows that either $w>0$ in $\cD$ or $w=0$ in $\Rd$. Hence the proof.
\end{proof}

\begin{remark}
The condition $u=v=0$ in $\cD^c$ in Theorem~\ref{T2.2} is not necessary. In fact, the same argument as above can used to establish comparison principle
provided $u\leq v$ in $\cD^c$.
\end{remark}

The rest of the article is devoted to the proof of Theorem~\ref{T-AP}. The central strategy of the proof can be grouped in following three steps.
\begin{itemize}
\item[(1)] We find a $\rho_1$ such that for every $\rho\leq\rho_1$ there exists a (minimal) solution of \eqref{E-AP}. We do this in Lemma~\ref{L3.2} and ~\ref{L3.3}.
\item[(2)] Next we find $\rho_2>\rho_1$ such that \eqref{E-AP} does not have any solution for $\rho\geq \rho_2$. This is the content of Lemma~\ref{L3.4}, \ref{L3.5}
and ~\ref{L3.6}
\item[(3)] Finally, we proceed along the lines of \cite{DF80} with suitable modifications to find the bifurcation point $\rho^*$.
\end{itemize}

Let us begin by establishing existence of sub/super-solutions, which will be used for constructing a minimal solution.
\begin{lemma}\label{L3.2}
Let Assumptions~\ref{A2.1} and [AP] hold. Then we have the following.
\begin{itemize}
\item[(1)] 
For every $\rho\in\RR$ there exists $\underline{u}\in\cC_0(\cD)$ satisfying $\underline{u}\leq 0$ in $\cD$ 
and
$$
\Psidel \underline{u} = f(x, \underline{u}) + \rho \Phi_1 + h(x) + g(x) \quad \text{in}\;\; \cD,
$$
for some nonpositive $g\in\cC(\bar\cD)$.
\item[(2)] 
There exists ${\rho}_1<0$ such that for every $\rho\leq {\rho}_1$ there exists $\bar{u}\in \cC_0(\cD)$ 
satisfying $\bar{u}\geq 0$ in $\cD$ and
$$
\Psidel \bar{u} = f(x, \bar{u}) + \rho \Phi_1 + h(x) + g(x) \quad \text{in}\;\; \cD,
$$
for some nonnegative $g\in\cC(\bar\cD)$.
\item[(3)] 
We can construct $\underline{u}$ to satisfy $\underline{u}\leq \hat{u}$, for every solution $\hat{u}\in \cC_0(\cD)$ of
$$
\Psidel \hat{u} = f(x, \hat{u}) + \rho \Phi_1 + h(x) + g(x) \quad \text{in}\;\; \cD,
$$
with $g\geq 0$.
\end{itemize}
\end{lemma}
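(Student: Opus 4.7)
My plan is to handle the three assertions with three explicit constructions, all grounded in Theorem~\ref{T2.1}, the refined maximum principle (Theorem~\ref{T2.2}), and the probabilistic identity (Lemma~\ref{L3.1}), together with the uniform bound $f(x,q)\geq -U_1(x)q-C$ valid for all $q\in\RR$ noted just after Assumption~[AP]. For part~(1), given $\rho\in\RR$ I choose $M\geq C+\|h\|_\infty+|\rho|\,\|\Phi_1\|_\infty$ and define $\underline u$ as the unique $\cC_0(\cD)$-solution of $(\Psidel+U_1)\underline u=-M$ in $\cD$ with $\underline u=0$ in $\cD^c$; this is supplied by Theorem~\ref{T2.1} since $\lambda^*_{U_1}>0$. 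Comparing $\underline u$ with the zero function through Theorem~\ref{T2.2} shows $\underline u<0$ in $\cD$, and then $g:=\Psidel\underline u-f(x,\underline u)-\rho\Phi_1-h\leq C-M-\rho\Phi_1-h\leq 0$ by the choice of $M$.

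For part~(2), I plan to take $\bar u=N\psi$, where $\psi\in\cC_0(\cD)$ solves $\Psidel\psi=1$ in $\cD$ with $\psi=0$ in $\cD^c$ (existence and positivity by Theorem~\ref{T2.1}, using $\lambda^*_0>0$ for a bounded domain), and $N>0$ is to be chosen. The key input is the boundary comparison $\Phi_1(x)\geq c_1\,\psi(x)$ on $\cD$ for some $c_1=c_1(\cD,\Psi)>0$, which follows from both $\Phi_1$ and $\psi$ being comparable to $V(\delta(x))$ near $\partial\cD$ thanks to the boundary estimates of~\cite{KKLL}. Setting $N:=C+\|h\|_\infty+1$ and $\rho_1:=-CN/c_1<0$, the linear growth bound $f(x,\bar u)\leq C(1+\bar u)$ together with $\rho\leq\rho_1$ gives
\[
\Psidel\bar u-f(x,\bar u)-\rho\Phi_1-h \;\geq\; (N-C-h)+\psi\bigl(|\rho|c_1-CN\bigr)\;\geq\; 1>0,
\]
so the required $g\geq 0$ holds.

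For part~(3), I fix $\underline u$ from~(1) with $M$ as above and let $\hat u\in\cC_0(\cD)$ satisfy $\Psidel\hat u=f(x,\hat u)+\rho\Phi_1+h+\tilde g$ with $\tilde g\geq 0$. Applying Lemma~\ref{L3.1} with potential $-U_1$ to both $\underline u$ and $\hat u$ and subtracting, the uniform bound $f(x,\hat u)+U_1\hat u\geq -C$ combined with $\tilde g\geq 0$ yields, for $w=\hat u-\underline u$,
\[
w(x)\;\geq\;\Exp^x\!\left[e^{-\int_0^t U_1(X_s)\D s}\,w(X_t)\Ind_{\{t<\uptau\}}\right]+\Exp^x\!\left[\int_0^{t\wedge\uptau} e^{-\int_0^s U_1(X_p)\D p}(M-C+\rho\Phi_1+h)(X_s)\D s\right].
\]
The second term is non-negative by the choice of $M$, so $w(x)\geq\Exp^x[e^{-\int_0^t U_1(X_s)\D s}\,w(X_t)\Ind_{\{t<\uptau\}}]$; invoking \cite[Theorem~4.2]{BL17b} exactly as in the proof of Theorem~\ref{T2.2} forces $w\geq 0$ in $\cD$, i.e., $\underline u\leq \hat u$.

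The hard part will be~(2). Since $\rho\Phi_1$ degenerates on $\partial\cD$ while $h$ need not, the naive choice $\bar u\equiv 0$ fails regardless of how negative $\rho$ is, and one must produce a super-solution whose $\Psi(-\Delta)$-image stays bounded away from zero at $\partial\cD$. The torsion function $\psi$ supplies this non-degeneration, and the comparability $\Phi_1\asymp\psi\asymp V(\delta(\cdot))$ from~\cite{KKLL} is precisely what allows the interior gain $|\rho|\Phi_1$ to cancel the linear-growth term $CN\psi$ uniformly on $\cD$. Parts~(1) and~(3) then follow as natural nonlocal analogues of the classical Ambrosetti--Prodi template once one has the probabilistic representation and the refined maximum principle in hand.
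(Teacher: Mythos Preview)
Your strategy for all three parts mirrors the paper's almost exactly: solve an auxiliary linear problem via Theorem~\ref{T2.1}, read off the sign from Theorem~\ref{T2.2}, and for part~(3) use the Feynman--Kac identity of Lemma~\ref{L3.1} together with \cite[Theorem~4.2]{BL17b}. The differences in~(1) and~(3) are purely cosmetic (the paper takes $-C_1+h+\rho\Phi_1$ on the right-hand side in~(1) rather than a constant $-M$, and in~(2) solves $\Psidel\bar u=h^++C_1$ rather than working with a multiple of the torsion function), and your computations there are correct.

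There is, however, a genuine gap in your justification of~(2). The inequality $\Phi_1\geq c_1\psi$ you invoke amounts to the pair of estimates $\psi(x)\lesssim V(\delta_\cD(x))$ and $\Phi_1(x)\gtrsim V(\delta_\cD(x))$. The first does follow from \cite{KKLL} (or \cite{BGR15}), but the second---a Hopf-type strict positivity of $\Phi_1/V(\delta_\cD)$ up to $\partial\cD$---does \emph{not}. The boundary regularity result of \cite{KKLL} only says $\Phi_1/V(\delta_\cD)\in\cC^\alpha(\bar\cD)$, i.e., that this ratio extends continuously to the boundary; it does not prevent the boundary value from vanishing. The paper obtains the required lower bound by a separate argument using the Dirichlet heat-kernel estimates of \cite{BGR14,CKS}: starting from $\Phi_1(x)=e^{2\lambda^*}\int_\cD \Phi_1(y)\,p^\cD(2,x,y)\,\D y$, one feeds in the two-sided bounds $p^\cD(t,x,y)\gtrsim \Prob^x(\uptau>t/2)\,\Prob^y(\uptau>t/2)\,p(t\wedge V^2(r),|x-y|)$ and $\Prob^x(\uptau>t)\gtrsim V(\delta_\cD(x))$ to extract $\Phi_1(x)\geq \kappa_3\,V(\delta_\cD(x))$. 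This is exactly the ``substitute for Hopf's lemma'' flagged in the introduction as one of the three key tools, and it is not available from \cite{KKLL} alone. Once you supply this ingredient, your argument for~(2) goes through unchanged.
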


\begin{proof}
Consider $\rho\in\RR$. Let $C_1= 2\sup_{\bar\cD}\abs{h} + 2\abs{\rho} + C $, where $C$ is the same constant as in 
\eqref{AP2}-\eqref{AP3}. Since $\lambda^*_{U_1}>0$ by \eqref{AP1}, it follows from Theorem~\ref{T2.1} that 
there exists a unique $\underline{u}\in\cC_0(\cD)$ satisfying
\begin{equation}\label{EL3.2A}
\Psidel\underline{u} = - U_1 \underline{u}-C_1+h(x)+ \rho\Phi_1\quad \text{in}\; \cD.
\end{equation}
By our choice of $C_1$ we see that
$$
\Psidel\underline{u} + U_1 \underline{u}= -C_1+h(x)+ \rho\Phi_1\leq 0 ,
$$
and hence, by Theorem~\ref{T2.2} we have $\underline{u}\leq 0$ in $\Rd$. Therefore, by making use of \eqref{AP2} 
and choosing $g(x)= -f(x, \underline{u})- U_1 \underline{u}-C_1$
we get that
$$
\Psidel\underline{u} = f(x, \underline{u}) + h(x)+ \rho\Phi_1 + g(x) \quad \text{in}\; \cD, \quad \text{and}\quad 
\underline{u}=0\quad \text{in}\; \cD^c.
$$
This proves part (1).

Now we proceed to establish (2). Due to Assumption [AP] there exists a constant $C_1$ satisfying $f(x, q)\leq C_1 (1+q)$,
for all $(x, q)\in \bar\cD\times[0, \infty)$. We consider the unique function $\bar{u}\in\cC_0(\cD)$ 
satisfying
\begin{equation}\label{EL3.2B}
\Psidel\bar{u}=h^+ + C_1 \quad \text{in}\; \cD.
\end{equation}
Therefore
$$\abs{u(x)} = \left|\Exp^x \left[\int_0^\uptau h^+(X_s) + C_1 \D{s}\right]\right|\leq (\norm{h}_\infty + C_1)\Exp^x[\uptau].$$
Thus by Assumption~\ref{A2.1} and \cite[Theorem~4.6 and Lemma~7.5]{BGR15} we obtain
\begin{equation}\label{EL3.2C}
\abs{u(x)}\leq c_1 V(\delta_\cD(x)), \quad x\in\bar\cD,
\end{equation}
for some constant $c_1$, dependent on $\cD$,
where $\delta_\cD(x)=\dist(x, \cD^c)$. Again
$$\bar{u}(x)\geq C_1 \Exp^x[\uptau]>0\quad \text{for}\; x\in\cD.$$
Let $p^\cD(t, x, y)$ be the transition density of the killed process $X^\cD$ in $\cD$. In fact, one can write
$$p^\cD(t, x, y)= p(t, \abs{x-y})-\Exp^x[p(t-\uptau, \abs{X_{\uptau}-y})\Ind_{\{\uptau<t\}}].$$
Using \cite[Theorem~4.5]{BGR14} (see also \cite{CKS}) we know that for some positive constants $\kappa_1, r$ we have for $x, y\in\cD$
\begin{align}
p^\cD(t, x, y) &\geq \kappa_1 \Prob^x(\uptau>t/2) \Prob^y(\uptau>t/2) p(t\wedge V^2(r), \abs{x-y}), \quad t\geq 0,\label{EL3.2C1}
\\
\Prob^x(\uptau>t) &\geq \kappa_1 \left(\frac{V(\delta_\cD(x))}{\sqrt{t\wedge V(r)}}\wedge 1\right). \label{EL3.2C2}
\end{align}
Now recall that  $\Psidel\Phi_1=\lambda^*\Phi_1$ in $\cD$, and $\Phi_1>0$ in $\cD$. 
Let $\cD_1\Subset\cD$. Fixing $t=2$ and using \eqref{E2.7} we get that
\begin{align*}
\Phi_1(x) &= e^{2\lambda^*} \Exp^x\left[ \Phi_1(X_t)\Ind_{\{2<\uptau\}}\right]
\\
&= e^{2\lambda^*} \int_\cD \Phi_1(y) p^\cD(2, x, y)\, \D{y}
\\
&\geq e^{2\lambda^*} \int_{\cD_1} \Phi_1(y) p^\cD(2, x, y)\, \D{y}
\\
&\geq \kappa_1 e^{2\lambda^*}\, \min_{\cD_1}\Phi_1\, \Prob^x(\uptau>1) \int_{\cD_1} \Prob^y(\uptau>1) p(1\wedge V^2(r), \abs{x-y})\, \D{y}
\\
&\geq \kappa_2\, p(1\wedge V^2(r), 0) \Prob^x(\uptau>1) \int_{\cD_1} \Prob^y(\uptau>1)\, \D{y},
\end{align*}
for some constant $\kappa_2$, where in the fourth inequality we use \eqref{EL3.2C1}. Now using \eqref{EL3.2C2} we can find a constant $\kappa_3>0$ satisfying
$$
\Phi_1(x)\geq \kappa_3 V(\delta_\cD(x)), \quad x\in \cD.
$$
Combining the above with \eqref{EL3.2C} and choosing $-\rho_1>0$ large, we find for every $\rho\leq \rho_1$ that
$$
- \rho\Phi_1(x)\geq C_1c_1 V(\delta_\cD(x))\geq C_1 \bar{u}(x), \quad \text{for}\; x\in\cD.
$$
Hence using \eqref{EL3.2B} and choosing $g(x)=-f(x, \bar{u}) -\rho\Phi_1+C_1+h^-\geq 0$ for
 $\rho\leq \bar\rho_1$ we have
$$
\Psidel\bar{u}= f(x, \bar{u}) + \rho\Phi + h + g \quad \text{in}\; \cD.
$$
This proves (2).

Now we come to (3). Since $f(x, q)\geq -U_1 q - C$, by Assumption [AP], applying Lemma~\ref{L3.1} we obtain that
\begin{align}\label{EL3.2D}
\hat{u}(x) &= \Exp^x\left[e^{-\int_0^t U_1(X_s)\, \D{s}} \hat{u}(X_t)\Ind_{\{t<\uptau\}}\right] +
 \Exp^x\left[\int_0^{t\wedge\uptau} e^{-\int_0^{s} U_1(X_p)\, \D{p}}(f(x, \hat{u}) + \rho\Phi + h + g+U_1\hat{u})(X_s)\, \D{s}\right]\nonumber
 \\
&\geq \Exp^x\left[e^{-\int_0^t U_1(X_s)\, \D{s}} \hat{u}(X_t)\Ind_{\{t<\uptau\}}\right] +
\Exp^x\left[\int_0^{t\wedge\uptau} e^{-\int_0^{s} U_1(X_p)\, \D{p}}(\rho \Phi +h -C)(X_s)\, \D{s}\right].
\end{align}
Also using \eqref{EL3.2A} and Lemma~\ref{L3.1} we have
\begin{equation}\label{EL3.2E}
\underline{u}(x)= \Exp^x\left[e^{-\int_0^t U_1(X_s)\, \D{s}} \underline{u}(X_t)\Ind_{\{t<\uptau\}}\right]
+\Exp^x\left[\int_0^{t\wedge\uptau} e^{-\int_0^{s} U_1(X_p)\, \D{p}}(\rho \Phi +h -C_1)(X_s)\, \D{s}\right].
\end{equation}
By our choice of $C_1$, we obtain from \eqref{EL3.2D} and \eqref{EL3.2E} that
$$w(x)\geq \Exp^x\left[e^{-\int_0^t U_1(X_s)\, \D{s}} w(X_t)\Ind_{\{t<\uptau\}}\right], \quad t\geq 0,$$
for $w=\hat{u}-\underline{u}$. Since $\lambda^*_{U_1}>0$, we obtain from \cite[Theorem~4.2]{BL17b} that $w\geq 0$ in $\Rd$.
Hence the result.
\end{proof}

Using Lemma~\ref{L3.2} we can now prove the existence of a minimal solution applying monotone iteration scheme.
\begin{lemma}\label{L3.3}
Suppose that the conditions of Lemma~\ref{L3.2} hold.
Then for $\rho\leq \rho_1$, where $\rho_1$ is same value as in Lemma~\ref{L3.2}, there exists $u\in\cC_0(\cD)$ 
satisfying
\begin{equation}\label{EL3.3A}
\Psidel u = f(x, u) + \rho \Phi_1 + h(x) \quad \text{in}\; \cD.
\end{equation}
Moreover, the above $u$ can be chosen to be minimal in the sense that if $\tilde u\in\cC_0(\cD)$ is another 
solution of \eqref{EL3.3A}, then $\tilde u \geq u$ in $\Rd$.
\end{lemma}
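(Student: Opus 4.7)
The plan is to build $u$ by monotone iteration starting from $\underline{u}$. Since $D_uf$ is continuous on $\bar\cD\times\RR$ and $\underline{u},\bar{u}$ are bounded, I fix a constant $K>0$ so large that $q\mapsto f(x,q)+Kq$ is nondecreasing on $[\inf_{\bar\cD}\underline{u},\sup_{\bar\cD}\bar{u}]$ uniformly in $x\in\bar\cD$. The constant potential $K$ satisfies $\lambda^*_K=\lambda^*+K>0$, so by Theorem~\ref{T2.1} the problem
\begin{equation*}
\Psidel v + Kv = f(x,\psi) + K\psi + \rho\Phi_1 + h \quad \text{in } \cD, \qquad v=0 \text{ in } \cD^c,
\end{equation*}
admits a unique solution $v=:\cT_K\psi\in\cC_0(\cD)$ for every $\psi\in\cC_0(\cD)$. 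Setting $u_0=\underline{u}$ and $u_{n+1}=\cT_K u_n$ produces a well-defined sequence in $\cC_0(\cD)$.

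I next prove by induction the monotone sandwich $\underline{u}=u_0\leq u_1\leq\cdots\leq u_n\leq u_{n+1}\leq\bar{u}$. The step $u_0\leq u_1$ follows from the sub-solution property of $\underline{u}$ in Lemma~\ref{L3.2}(1): the defining equations for $u_0$ and $u_1$ have right-hand sides differing by the nonpositive function $g$ supplied by that lemma, so Theorem~\ref{T2.2}, applied with potential $U\equiv K$, forces $u_0\leq u_1$. The step $u_n\leq u_{n+1}$ uses monotonicity of $f+Kq$: since $u_{n-1}\leq u_n$ lies in $[\inf_{\bar\cD}\underline{u},\sup_{\bar\cD}\bar{u}]$, the right-hand side for $u_{n+1}$ dominates that for $u_n$, and Theorem~\ref{T2.2} yields the comparison. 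The upper bound $u_{n+1}\leq\bar{u}$ is obtained identically from the super-solution property in Lemma~\ref{L3.2}(2). Thus $\{u_n\}$ is monotone and uniformly bounded, so converges pointwise to some $u$ with $\underline{u}\leq u\leq\bar{u}$. Because the right-hand side $f(\cdot,u_n)+Ku_n+\rho\Phi_1+h$ is uniformly bounded in $n$, estimate~\eqref{ET2.1C} combined with~\eqref{E2.6} yields a uniform H\"older-type modulus of continuity for $\{u_n\}$; Arzel\`a--Ascoli then upgrades pointwise to uniform convergence on $\bar\cD$, placing $u\in\cC_0(\cD)$, and passing to the limit in $u_{n+1}=\cT_K u_n$ shows $u=\cT_K u$, i.e., $u$ solves~\eqref{EL3.3A}.

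It remains to verify minimality. For any other solution $\tilde u\in\cC_0(\cD)$, Lemma~\ref{L3.2}(3) applied with $g\equiv0$ gives $\underline{u}\leq\tilde u$. Choosing $K$ at the outset so large that $f+Kq$ is additionally nondecreasing on $[\inf_{\bar\cD}\underline u,\norm{\tilde u}_\infty]$---possible since $D_uf$ is continuous and $\tilde u$ is bounded---the same inductive comparison, now applied between $u_n$ and $\tilde u$ via Theorem~\ref{T2.2} (using that $\tilde u$ is itself a fixed point of $\cT_K$), yields $u_n\leq\tilde u$ for every $n$, whence $u\leq\tilde u$ in $\Rd$. The main technical point is that the admissible $K$ depends on the a priori unknown bound $\norm{\tilde u}_\infty$; to see that this does not produce a $K$-dependent limit, one checks that if $K_1\leq K_2$ are both admissible then the iterations $\{u_n^{(K_1)}\}$ and $\{u_n^{(K_2)}\}$ trap each other between $\underline{u}$ and any common fixed point of $\cT_{K_1}$ and $\cT_{K_2}$, and the set of such fixed points coincides with the solution set of~\eqref{EL3.3A}, forcing the two limits to agree. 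This $K$-independence of the constructed $u$ is the only step not reducible to a one-line comparison, and is where the bulk of the technical care is needed.
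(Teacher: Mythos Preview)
Your proof is correct and follows essentially the same monotone-iteration route as the paper: the paper iterates $\Psidel u^{(n+1)}+\theta u^{(n+1)}=F(x,u^{(n)})+\theta u^{(n)}$ from $u^{(0)}=\underline{u}$, establishes the sandwich $\underline{u}\le u^{(n)}\le u^{(n+1)}\le\bar{u}$ via Lemma~\ref{L3.1} together with \cite[Th.~4.2]{BL17b} (which is exactly the content of Theorem~\ref{T2.2} you invoke), and passes to the limit using the equicontinuity estimate from \cite{KKLL} and Arzel\`a--Ascoli. Your final paragraph on $K$-independence makes explicit what the paper compresses into the single phrase ``$\bar{u}$ can be replaced by $\tilde u$''; the argument is correct but can be shortened by noting that the constructed $u$ is already minimal among solutions lying in $[\underline u,\bar u]$, so a second iteration with a larger $K'$ adapted to $\tilde u$ produces a solution that is both $\le\tilde u$ and in $[\underline u,\bar u]$, hence $\ge u$.
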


\begin{proof}
The proof is based on the standard monotone iteration method. Denote by $m=\min_{\bar\cD} \underline{u}$ and $M=\max_{\bar\cD}\bar{u}$. 
Let $\theta>0$ be a Lipschitz constant for $f(x, \cdot)$ on the interval $[m, M]$, i.e.,
$$
\abs{f(x, q_1)-f(x, q_2)}\leq \theta \abs{q_1-q_2}\quad \text{for}\; q_1, q_2\in [m, M], \; x\in\bar\cD.
$$
Denote $F(x, u)= f(x, u) + \rho\Phi(x)+h(x)$. Consider the solutions of the following family of problems:
\begin{equation}\label{EL3.3B}
\begin{split}
\Psidel u^{(n+1)} + \theta u^{(n+1)} &= F(x, u^{(n)}) + \theta u^{(n)}\quad \text{in}\; \cD,
\\
u^{(n+1)} &= 0 \quad \text{in}\; \cD^c.
\end{split}
\end{equation}
By Theorem~\ref{T2.1},  \eqref{EL3.3B} has a unique solution. We claim that 
\begin{equation}\label{EL3.3C}
\underline{u}=u^{(0)}\leq u^{(n)}\leq u^{(n+1)}\leq \bar{u}\quad \text{for all}\; n\geq 1.
\end{equation}
Denote $w^{(n)}=u^{(n)}-u^{(n-1)}$. Then using Lemma~\ref{L3.1} it is easily seen that
\begin{align}\label{EL3.3D}
w^{(n+1)}(x) &= \Exp^x\left[ e^{-\theta t} w^{(n+1)}(X_t)\Ind_{\{t<\uptau\}}\right]\nonumber
\\
&\quad + \Exp^x\left[\int_0^{t\wedge\uptau} e^{-\theta s}\left(F(X_s, u^{(n)})-F(X_s, u^{(n-1)}) + \theta(u^{(n)}-u^{(n-1)})\right)\, \D{s}\right]
\end{align}
We note that for $n=0$ the right most term in \eqref{EL3.3D} vanishes. Therefore,
$$w^{(1)}(x) \geq \Exp^x\left[ e^{-\theta t} w^{(1)}(X_t)\Ind_{\{t<\uptau\}}\right]\quad t\geq 0.$$
From \cite[Theorem~4.2]{BL17b} we find $w^{(1)}\geq 0$. Note that if $u^{(n)}-u^{(n-1)}\geq 0$ we have 
$$w^{(n+1)}(x) \geq \Exp^x\left[ e^{-\theta t} w^{(n+1)}(X_t)\Ind_{\{t<\uptau\}}\right]\quad t\geq 0,$$
and therefore, we can apply induction to obtain $\underline{u}=u^{(0)}\leq u^{(n)}\leq u^{(n+1)}$. Denoting $v^{n}=\bar{u}-u^{(n)}$ we again
write
\begin{align*}
v^{(n+1)}(x) &\geq \Exp^x\left[ e^{-\theta t} v^{(n+1)}(X_t)\Ind_{\{t<\uptau\}}\right]
\\
&\quad + \Exp^x\left[\int_0^{t\wedge\uptau} e^{-\theta s}\left(F(X_s, \bar{u})-F(X_s, u^{(n)}) + \theta(\bar{u}-u^{(n)})\right)\, \D{s}\right].
\end{align*}
Again employing an induction argument we have $u^{(n)}\leq \bar{u}$. This proves our claim \eqref{EL3.3C}. Therefore, the right hand
side of \eqref{EL3.3B} is bounded uniformly in $n$. Hence by \cite[Theorem~1.1]{KKLL} we obtain
$$\abs{u^{(n)}(x)-u^{(n)}(y)}\leq \kappa\, V(|x-y|)\quad x, y\in\cD, \; n\geq 1.$$
This gives equicontinuity to the family $\{u^{(n)}\}_{n\geq 1}$. Hence by Arzel\`{a}-Ascoli theorem we get that $u^{(n)}\uparrow u$ uniformly in $\Rd$. Thus
we obtain a solution $u$ by passing to the limit in \eqref{EL3.3B}.

To establish minimality we consider a solution $\tilde u$ of \eqref{EL3.3A} in $\cC_0(\cD)$.
From Lemma~\ref{L3.2}(3) we see that $\underline{u}\leq \tilde u$ in $\Rd$. Thus $\bar{u}$ can be replaced by 
$\tilde u$, and the above argument shows that $u\leq \tilde u$.
\end{proof}

Now we derive a priori bounds on the solutions of \eqref{E-AP}. Our first result bounds the negative part of solutions 
$u$ of \eqref{E-AP}.

\begin{lemma}\label{L3.4}
Suppose that Assumption~\ref{A2.1} and [AP](2) hold.
There exists a constant $\kappa = \kappa(d, \Psi, \cD, U_1)$, such that for any solution $u$ of \eqref{E-AP} with $\rho
\geq -\hat\rho, \hat\rho>0,$ we have
$$
\sup_{\cD}\abs{u^-}\;\leq\; \kappa (C+\hat\rho+\norm{h}_\infty),
$$
where $C$ is same constant as in \eqref{AP2}.
\end{lemma}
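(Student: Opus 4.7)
The plan is to compare $u$ with an explicit bounded auxiliary function whose $L^\infty$-norm is controlled by $K\df C+\hat\rho+\norm{h}_\infty$, using $U_1$ as the potential that renders the operator $\Psidel+U_1$ invertible (this is where $\lambda^*_{U_1}>0$ is crucial).

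First, I would use the linear lower bound on $f$. As the authors note immediately after Assumption~[AP], conditions \eqref{AP2}--\eqref{AP3} together with $U_1\geq U_2$ give $f(x,q)\geq -U_1(x)q-C$ for every $q\in\RR$. Therefore, at every solution $u$ of \eqref{E-AP},
\begin{equation*}
\Psidel u + U_1 u \;=\; f(x,u)+U_1 u + \rho\,\Phi_1 + h \;\geq\; -C+\rho\,\Phi_1+h \quad\text{in}\;\cD.
\end{equation*}
Since $0\leq\Phi_1\leq 1$ and $\rho\geq-\hat\rho$, one has $\rho\Phi_1\geq -\hat\rho$, and trivially $h\geq-\norm{h}_\infty$. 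Consequently
\begin{equation*}
\Psidel u + U_1 u \;\geq\; -K \quad\text{in}\;\cD, \qquad u=0 \;\;\text{in}\;\;\cD^c.
\end{equation*}

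Next, since $\lambda^*_{U_1}>0$, Theorem~\ref{T2.1} produces a unique $w\in\cC_0(\cD)$ with $\Psidel w+U_1 w=-K$ in $\cD$ and $w=0$ in $\cD^c$. The proof of Theorem~\ref{T2.1} establishes the $L^\infty$-bound \eqref{ET2.1G}, which here reads $\sup_{\bar\cD}\abs{w}\leq \kappa\,K$ for a constant $\kappa=\kappa(d,\Psi,\cD,U_1)$ (this is the very constant that appears in the statement of the lemma). Applying Theorem~\ref{T2.2} to the pair $(w,u)$ with common potential $U_1$ and forcing terms $g_1=-K\leq g_2$ (the actual forcing for $u$), I would conclude $u\geq w$ in $\Rd$, and hence $u^-\leq w^-\leq \sup_{\bar\cD}\abs{w}\leq \kappa K=\kappa(C+\hat\rho+\norm{h}_\infty)$.

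The main technical point to address is that $u$ does not satisfy an equation with right-hand side $-K$; it satisfies an inequality. So Theorem~\ref{T2.2} cannot be quoted verbatim, and one has to observe that its proof extends to the super/subsolution setting. Concretely, the inequality version of the Dynkin formula in Lemma~\ref{L3.1} yields
\begin{equation*}
u(x)\;\geq\;\Exp^x\!\Bigl[e^{-\int_0^t U_1(X_s)\D{s}}u(X_t)\Ind_{\{t<\uptau\}}\Bigr]+\Exp^x\!\Bigl[\int_0^{t\wedge\uptau}\! e^{-\int_0^s U_1(X_p)\D{p}}(-K)\,\D{s}\Bigr],
\end{equation*}
and the analogous identity (with equality) for $w$; subtracting gives $u-w\geq \Exp^x[e^{-\int_0^t U_1\D{s}}(u-w)(X_t)\Ind_{\{t<\uptau\}}]$, to which \cite[Theorem~4.2]{BL17b} applies (exactly as in the proof of Theorem~\ref{T2.2}) to yield $u\geq w$. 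Once this extension of the comparison principle is granted, the bound follows directly.
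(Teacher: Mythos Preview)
Your proposal is correct and follows essentially the same route as the paper. The paper applies Lemma~\ref{L3.1} to $u$ with potential $U_1$, uses $f(x,u)+U_1u\ge -C$ and $\rho\Phi_1+h\ge -\hat\rho\Phi_1-\norm{h}_\infty$ to obtain the semigroup inequality, constructs the auxiliary solution $v$ of $\Psidel v+U_1v=-\hat\rho\Phi_1-\norm{h}_\infty-C$, subtracts, and invokes \cite[Theorem~4.2]{BL17b} together with the $L^\infty$ bound from \cite[Theorem~4.7]{BL17b}; this is exactly your argument, with two cosmetic differences---the paper keeps the nonconstant forcing $-\hat\rho\Phi_1-\norm{h}_\infty-C$ rather than your constant $-K$, and it passes through $w=u\wedge 0$ before comparing, whereas you compare $u$ directly (which is slightly cleaner). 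One terminological remark: you do not need an ``inequality version'' of Lemma~\ref{L3.1}; since $u$ solves $\Psidel u=g$ with $g=f(\cdot,u)+\rho\Phi_1+h\in\cC(\bar\cD)$, Lemma~\ref{L3.1} gives an identity, and the inequality enters only afterwards via $U_1u+g\ge -K$.
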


\begin{proof}
Let $u$ be a solution to \eqref{E-AP} for some $\rho\geq -\hat\rho$. Denote by $w=u\wedge 0$. Then by Lemma~\ref{L3.1} we get
\begin{align*}\label{EL3.4A}
{u}(x) &= \Exp^x\left[e^{-\int_0^t U_1(X_s)\, \D{s}} {u}(X_t)\Ind_{\{t<\uptau\}}\right] +
 \Exp^x\left[\int_0^{t\wedge\uptau} e^{-\int_0^{s} U_1(X_p)\, \D{p}}(f(x, u) + \rho\Phi + h +U_1{u})(X_s)\, \D{s}\right]\nonumber
 \\
&\geq \Exp^x\left[e^{-\int_0^t U_1(X_s)\, \D{s}} {u}(X_t)\Ind_{\{t<\uptau\}}\right] +
\Exp^x\left[\int_0^{t\wedge\uptau} e^{-\int_0^{s} U_1(X_p)\, \D{p}}(\rho \Phi +h -C)(X_s)\, \D{s}\right]\nonumber
\\
&\geq \Exp^x\left[e^{-\int_0^t U_1(X_s)\, \D{s}} w(X_t)\Ind_{\{t<\uptau\}}\right] +
\Exp^x\left[\int_0^{t\wedge\uptau} e^{-\int_0^{s} U_1(X_p)\, \D{p}}(-\hat\rho\Phi_1-\norm{h}_\infty-C)\, \D{s}\right]
\end{align*}
since the right hand side of the above display is non-positive we have 
\begin{equation}\label{EL3.4A}
w(x)\geq \Exp^x\left[e^{-\int_0^t U_1(X_s)\, \D{s}} w(X_t)\Ind_{\{t<\uptau\}}\right] +
\Exp^x\left[\int_0^{t\wedge\uptau} e^{-\int_0^{s} U_1(X_p)\, \D{p}}(-\hat\rho\Phi_1-\norm{h}_\infty-C)\, \D{s}\right],
\end{equation}
for $t\geq 0$ and $x\in\cD$. Let $v\in\cC_0(\cD)$ be the unique solution of 
\begin{equation}\label{EL3.4B}
\Psidel v + U_1 v= -\hat\rho\Phi_1-\norm{h}-C\quad \text{in}\; \cD.
\end{equation}
This is assured by Theorem~\ref{T2.1}. Using Lemma~\ref{L3.1} we see that
$$v(x)= \Exp^x\left[e^{-\int_0^t U_1(X_s)\, \D{s}} v(X_t)\Ind_{\{t<\uptau\}}\right] +
\Exp^x\left[\int_0^{t\wedge\uptau} e^{-\int_0^{s} U_1(X_p)\, \D{p}}(-\hat\rho\Phi_1-\norm{h}_\infty-C)\, \D{s}\right].
$$
Combining with \eqref{EL3.4A} we find
\begin{equation}\label{EL3.4C}
(w-v)(x)\geq \Exp^x\left[e^{-\int_0^t U_1(X_s)\, \D{s}} (w-v)(X_t)\Ind_{\{t<\uptau\}}\right] \quad t\geq 0, \; x\in\cD.
\end{equation}
Since $\lambda^*_{U_1}>0$, using \eqref{EL3.4C} and \cite[Theorem~4.2]{BL17b} we obtain that $w\geq v$ in $\Rd$.
From \eqref{EL3.4B} and \cite[Th.~4.7]{BL17b} we obtain a constant $\kappa = \kappa(d,\Psi, \cD,U_1)$ satisfying
$$
\sup_{x\in\bar\cD}\abs{v}\;\leq\; \kappa (C+\hat\rho + \norm{h}_\infty)
$$
holds. Thus $u^-=-w\leq \kappa (C+\hat\rho + \norm{h}_\infty)$, for $x\in\cD$, and the result follows.
\end{proof}

Our next result provides a lower bound on the growth of the solution for large $\rho$.

\begin{lemma}\label{L3.5}
Let Assumption~\ref{A2.1} and [AP](1)-(2) hold.
For every $\hat\rho>0$ there exists $C_3>0$ such that for every solution $u$ of \eqref{E-AP} with $\rho\geq -\hat\rho$ 
we have
$$
\rho^+\leq C_3(1+\norm{u^+}_\infty) \leq C_3(1+\norm{u}_\infty).
$$
\end{lemma}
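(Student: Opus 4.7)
The plan is to test the equation \eqref{E-AP} against the principal eigenfunction $\Phi_1$ and use the eigenvalue identity to isolate $\rho$, then use Lemma~\ref{L3.4} to control the part of $u$ that is negative. Concretely, I will first establish the duality
\[
\lambda^* \int_\cD u\,\Phi_1\,\D{x} = \int_\cD f(x,u)\,\Phi_1\,\D{x} + \rho \int_\cD \Phi_1^2\,\D{x} + \int_\cD h\,\Phi_1\,\D{x}.
\]
This follows from the Green-function representation $u(x)=\int_\cD G^\cD(x,y)(f(y,u)+\rho\Phi_1(y)+h(y))\,\D{y}$ for potential-theoretic solutions, the eigenfunction identity $\Phi_1(x)=\lambda^*\int_\cD G^\cD(x,y)\Phi_1(y)\,\D{y}$, the symmetry $G^\cD(x,y)=G^\cD(y,x)$ of the Green function of the symmetric subordinate Brownian motion $X$, and Fubini (justified since $\Phi_1$ is bounded, $f(\cdot,u), h$ are bounded, and $\int_\cD G^\cD(x,y)\,\D{y}=\Exp^x[\uptau]$ is finite and bounded on $\bar\cD$).

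Next I invoke the observation recorded after Assumption [AP] that $f(x,q)\geq -U_1(x)q - C$ for every $q\in\RR$, so
\[
-\int_\cD f(x,u)\Phi_1\,\D{x} \;\leq\; \int_\cD U_1\,u\,\Phi_1\,\D{x} + C\int_\cD \Phi_1\,\D{x}.
\]
Inserting this into the duality identity and rearranging gives
\[
\rho \int_\cD \Phi_1^2\,\D{x} \;\leq\; \int_\cD (\lambda^* + U_1)\,u\,\Phi_1\,\D{x} + C\int_\cD \Phi_1\,\D{x} - \int_\cD h\,\Phi_1\,\D{x}.
\]
Splitting $u = u^+ - u^-$ and noting $\lambda^*+U_1 \geq 0$ need not hold, I bound the integrand using $\|\lambda^*+U_1\|_\infty$. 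By Lemma~\ref{L3.4}, $\|u^-\|_\infty \leq \kappa(C+\hat\rho+\|h\|_\infty)$, so
\[
\int_\cD (\lambda^*+U_1)\,u\,\Phi_1\,\D{x} \;\leq\; \|\lambda^*+U_1\|_\infty\bigl(\|u^+\|_\infty + \kappa(C+\hat\rho+\|h\|_\infty)\bigr)\int_\cD \Phi_1\,\D{x}.
\]

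Finally, since $\Phi_1>0$ in $\cD$, we have $\int_\cD \Phi_1^2\,\D{x} > 0$, so dividing the preceding inequality by this quantity yields a bound of the form $\rho \leq C_3(1+\|u^+\|_\infty)$ with $C_3=C_3(\hat\rho,h,C,U_1,\Phi_1)$, and since $\rho^+ = \rho \vee 0$ the bound $\rho^+ \leq C_3(1+\|u^+\|_\infty) \leq C_3(1+\|u\|_\infty)$ follows. The main obstacle is the rigorous justification of the duality identity for potential-theoretic solutions; this is where symmetry of $G^\cD$ and Fubini must be applied cleanly, and I will rely on the Green-function representation and eigenfunction property quoted in Section~2 rather than attempting integration by parts at the PDE level.
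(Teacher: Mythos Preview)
Your proof is correct but takes a genuinely different route from the paper. The paper argues pointwise: it sets $\varphi = u - \tfrac{\rho}{\lambda^*}\Phi_1$, observes that $\Psidel\varphi = f(x,u)+h$ in $\cD$, uses the Green-function representation $\varphi(x)=\Exp^x\bigl[\int_0^{\uptau}(f(X_s,u(X_s))+h(X_s))\,\D s\bigr]$ together with the lower bound $f(x,u)+h\geq -C_4(u^+(x)+1)$ (the latter coming from [AP](2) and Lemma~\ref{L3.4}) to control $(-\varphi)^+$, and then evaluates at a point where $\Phi_1(x)=1$ to extract $\rho$. Your argument is instead the classical integral/testing argument: you pair the equation with $\Phi_1$ via the symmetry of $G^\cD$ and Fubini, isolate $\rho\int_\cD\Phi_1^2$, and bound the remaining integrals using the same lower bound on $f$ and Lemma~\ref{L3.4}. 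The paper's approach avoids any appeal to the symmetry of the Green function or to Fubini, working entirely with the stochastic representation and a single-point evaluation; your approach is closer to the variational treatments in the Ambrosetti--Prodi literature and has the advantage of being insensitive to the normalization $\|\Phi_1\|_\infty=1$. Both rely on exactly the same two ingredients ([AP](2) and Lemma~\ref{L3.4}) and yield constants of the same type.
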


\begin{proof}
Let $\varphi=u-\frac{\rho}{\lambda^*}\Phi_1$. Then we have $\varphi\in\cC_0(\cD)$. Also,
\begin{align*}
\Psidel\varphi (x)=f(x, u)+\rho\Phi_1+h-\rho\Phi_1 = f(x, u)-h
\end{align*}
In particular,
$$\varphi(x)=\Exp^x\left[\int_0^\uptau (f(X_s, u(X_s))-h(X_s))\, \D{s}\right], \quad x\in\cD.$$
By our assumption on $f$ and Lemma~\ref{L3.4} we can find a constant $C_4 = C_4(\norm{h}_\infty,\norm{U_1}_\infty, C, \hat\rho)$ satisfying
$$
f(x, u)-h\geq -U_1(x) u -C-\norm{h}_\infty\geq -U_1(x) u^+-\norm{U_1 u^-}_\infty -C-\norm{h}_\infty\geq -C_4(u^+(x)+1).
$$
It then follows that with a constant $C_5$, dependent on $\diam \cD$, 
$$
\sup_{\cD} (-\varphi)^+\leq C_5 C_4 (1+ \norm{u^+}_\infty)
$$
holds. Pick $x\in\cD$ such that $\Phi_1(x)=1$; this is possible since $\norm{\Phi_1}_\infty=1$ by assumption. 
It gives 
$$
\frac{\rho}{\lambda^*}-u(x) \leq (-\varphi(x))^+\leq C_5 C_4 (1+ \norm{u^+}_\infty),
$$
which, in turn, implies
$$
\rho\leq \lambda^* \left( C_4 C_5  +(1+C_4 C_5)\norm{u^+}_\infty\right),
$$
proving the claim. 
\end{proof}

One may notice that we have not used the second condition in \eqref{AP1} so far. The next result makes use of this condition to 
establish an upper bound on the growth of $u$.

\begin{lemma}\label{L3.6}
Suppose that Assumption~\ref{A2.1} and [AP] hold.
For each $\hat\rho>0$ there exists $C_0$ such that for every solution $u$ of \eqref{E-AP}, for $\rho\geq -\hat\rho$, 
we have
\begin{equation}\label{EL3.6A}
\norm{u}_\infty\leq C_0.
\end{equation}
In particular, there exists $\rho_2>0$ such that \eqref{E-AP} does not have any solution for $\rho\geq \rho_2$.
\end{lemma}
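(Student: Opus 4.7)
My plan is to argue by contradiction via a blow-up analysis, then exploit the condition $\lambda^*_{U_2} < 0$ through a Feynman--Kac eigenfunction pairing. Suppose there is a sequence of solutions $u_n$ of \eqref{E-AP} with $\rho_n \geq -\hat\rho$ and $M_n \df \norm{u_n}_\infty \to \infty$. Lemma~\ref{L3.4} bounds $\norm{u_n^-}_\infty$, while Lemma~\ref{L3.5} gives $\rho_n/M_n \leq C_3(1+1/M_n)$, so up to a subsequence $\rho_n/M_n \to \tilde\rho \in [0,\infty)$. Set $v_n \df u_n/M_n$, so $\norm{v_n}_\infty = 1$, $v_n^- \to 0$ uniformly, and
\begin{equation*}
\Psidel v_n = F_n + \tfrac{\rho_n}{M_n}\Phi_1 + \tfrac{h}{M_n} \quad \text{in } \cD, \qquad v_n = 0 \text{ in } \cD^c,
\end{equation*}
with $F_n \df f(\cdot, u_n)/M_n$ uniformly bounded in $L^\infty(\cD)$ by Assumption [AP](3). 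The regularity estimate from \cite{KKLL} already invoked in \eqref{ET2.1C} gives a uniform $\cC^{\bar\phi}(\cD)$-bound, so Arzel\`a--Ascoli yields $v_n \to v$ uniformly along a subsequence with $v \in \cC_0(\cD)$, $v \geq 0$, and $\norm{v}_\infty = 1$; Banach--Alaoglu then gives, up to a further subsequence, $F_n \to F$ weak-$*$ in $L^\infty(\cD)$, and passing to the limit in the Green-function representation shows $v$ solves $\Psidel v = F + \tilde\rho \Phi_1$ in $\cD$.

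The key step is the lower bound $F + U_2 v \geq 0$ a.e. For $u_n(x) \geq 0$ this is just \eqref{AP3} scaled: $F_n + U_2 v_n \geq -C/M_n$. For $u_n(x) \leq 0$, combining \eqref{AP2} with $U_1 \geq U_2$ yields $F_n + U_2 v_n = (F_n + U_1 v_n) + (U_2 - U_1) v_n \geq -C/M_n$, since $(U_2 - U_1) v_n \geq 0$ as a product of two nonpositive quantities. Testing against an arbitrary $\varphi \in L^1(\cD)$, $\varphi \geq 0$, and using the uniform convergence $v_n \to v$ together with the weak-$*$ convergence $F_n \to F$, we obtain $F + U_2 v \geq 0$ a.e.\ in $\cD$. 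Applying Lemma~\ref{L3.1} to $v$ with potential $-U_2$ and source $g = F + \tilde\rho\,\Phi_1$ then gives
\begin{equation*}
v(x) = \Exp^x\!\left[e^{-\int_0^t U_2(X_s)\,\D s}\, v(X_t)\Ind_{\{t<\uptau\}}\right] + \Exp^x\!\left[\int_0^{t\wedge\uptau} e^{-\int_0^s U_2(X_p)\,\D p}\,(U_2 v + F + \tilde\rho\,\Phi_1)(X_s)\,\D s\right],
\end{equation*}
and the second term is nonnegative, so $v(x) \geq T^{\cD, U_2}_t v(x)$ for every $t\geq 0$.

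Let $\Phi^*_{U_2} \in \cC_0(\cD)$ be the positive principal eigenfunction of $\Psidel + U_2$, satisfying $T^{\cD, U_2}_t \Phi^*_{U_2} = e^{-\lambda^*_{U_2} t} \Phi^*_{U_2}$. Pairing the inequality $v \geq T^{\cD, U_2}_t v$ with $\Phi^*_{U_2}$ and using the self-adjointness of $T^{\cD, U_2}_t$ on $L^2(\cD)$,
\begin{equation*}
\int_\cD v\,\Phi^*_{U_2}\,\D x \;\geq\; e^{-\lambda^*_{U_2} t}\int_\cD v\,\Phi^*_{U_2}\,\D x.
\end{equation*}
Since $\lambda^*_{U_2}<0$ forces $e^{-\lambda^*_{U_2} t} > 1$, this implies $\int_\cD v\,\Phi^*_{U_2}\,\D x \leq 0$; but $v\geq 0$ and $\Phi^*_{U_2}>0$ in $\cD$, so $v \equiv 0$, contradicting $\norm{v}_\infty=1$. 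This proves \eqref{EL3.6A}. For the non-existence claim, fix any $\hat\rho>0$, let $C_0 = C_0(\hat\rho)$ be the bound in \eqref{EL3.6A}, and apply Lemma~\ref{L3.5} to obtain $\rho \leq C_3(1+C_0)$ for any solution with $\rho \geq -\hat\rho$; then any $\rho_2 > C_3(1+C_0)$ works. I expect the main obstacle to be the verification of $F + U_2 v \geq 0$ in the weak-$*$ limit: the case split on the sign of $u_n$ forces one to use both inequalities \eqref{AP2} and \eqref{AP3} together with the ``technical'' hypothesis $U_1 \geq U_2$, which is precisely where the two-potential structure of Assumption [AP] becomes essential.
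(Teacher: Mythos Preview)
Your blow-up strategy is the same as the paper's, and the contradiction you derive is correct. There are, however, two differences in execution worth noting.

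First, the paper avoids the weak-$*$ compactness step entirely. Rather than extracting a limit $F$ of $F_n$ and then passing to a limit equation for $v$, the paper applies Lemma~\ref{L3.1} already at the level of $v_n$ (where the source is continuous), obtains the Feynman--Kac inequality for $v_n$, and only then lets $n\to\infty$. Concretely, the paper constructs an explicit lower bound
\[
I_n(x)=\frac{1}{\|u_n\|_\infty}\bigl(f(x,-u_n^-)+h-U_2 u_n^- -C+(\rho_n\wedge 0)\Phi_1\bigr)
\]
for $G_n=H_n+U_2 v_n$, checks $G_n\ge I_n$ using only \eqref{AP3} (the case $u_n<0$ becomes trivial since $-u_n^-=u_n$ there), and observes that $I_n\to 0$ uniformly by Lemma~\ref{L3.4}. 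This bypasses your need to verify that Lemma~\ref{L3.1} applies with a merely $L^\infty$ source $g=F+\tilde\rho\,\Phi_1$; as stated, Lemma~\ref{L3.1} assumes $g\in\cC(\bar\cD)$, so your order of operations introduces a small gap. It is easily closed either by extending Lemma~\ref{L3.1} to bounded measurable sources (the proof only uses \eqref{E2.9} and the strong Markov property, which need no continuity), or more simply by switching to the paper's order: apply Lemma~\ref{L3.1} to $v_n$ first, then take limits.

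Second, for the final contradiction the paper invokes \cite[Prop.~4.1]{BL17b}: from $v\gneq 0$ and $v\ge T^{\cD,U_2}_t v$ it concludes $\lambda^*_{U_2}\ge 0$. Your eigenfunction-pairing argument using self-adjointness of $T^{\cD,U_2}_t$ is a perfectly valid and arguably more elementary substitute, since the symmetry of the transition kernel is already established in the setup.

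The remark you make at the end about the hypothesis $U_1\ge U_2$ is interesting but slightly off target: the paper's construction of $I_n$ uses only \eqref{AP3} in this lemma, so the inequality $U_1\ge U_2$ is not needed here. It enters elsewhere (the global lower bound $f(x,q)\ge -U_1 q-C$ for all $q$, used in Lemmas~\ref{L3.2} and \ref{L3.4}).
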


\begin{proof}
Suppose, to the contrary, that there exists a sequence $(\rho_n, u_n)_{n\in\mathbb N}$ satisfying \eqref{E-AP} with 
$\rho_n\geq -\hat\rho$ and $\norm{u_n}_\infty\to \infty$. From Lemma~\ref{L3.4} it follows that $\norm{u^+_n}_\infty
=\norm{u_n}_\infty$. Define $v_n=\frac{u_n}{\norm{u_n}_\infty}$. Then 
\begin{equation}\label{EL3.6B}
\Psidel v_n=H_n(x)=\frac{1}{\norm{u_n}_\infty}\left(f(x, u_n) + \rho_n\Phi_1 + h\right)\quad \text{in}\; \cD.
\end{equation}
Since $\norm{H_n}_\infty$ is uniformly bounded by Lemma~\ref{L3.5}, it follows by 
\cite[Theorem~1.1]{KKLL} that
$$
\sup_n\;\norm{v_n}_{\cC^{\bar\phi}(\cD)}\leq \kappa_1,
$$
for some constant $\kappa_1$ and $\norm{\cdot}_{\cC^{\bar\phi}(\cD)}$ is given by \eqref{phi-norm}.
Hence we can extract a subsequence of $\seq v$, denoted by the original sequence, such that it converges to a 
continuous function $v\in\cC_0(\cD)$ in $\cC(\Rd)$. Denote 
\begin{align*}
G_n(x)&= \frac{1}{\norm{u_n}_\infty}(f(x, u_n(x))+h(x)+U_2(x)u_n(x)+ \rho_n\Phi_1(x)),
\\
I_n(x) & = \frac{1}{\norm{u_n}_\infty}(f(x, -u^-_n(x))+h(x)-U_2(x)u^-_n(x)-C +(\rho_n\wedge 0)\Phi_1(x)).
\end{align*}
It then follows from \eqref{AP3} that $G_n\geq I_n$ and $I_n\to 0$ uniformly by Lemma~\ref{L3.5}.
Using \eqref{EL3.6B} and Lemma~\ref{L3.1}, we get
\begin{align}\label{EL3.6C}
v_n(x) &= \Exp^x\left[e^{-\int_0^t U_2(X_s)\, \D{s}} v_n(X_t)\Ind_{\{t<\uptau\}}\right] +
 \Exp^x\left[\int_0^{t\wedge\uptau} e^{-\int_0^{s} U_2(X_p)\, \D{p}}G_n(X_s)\, \D{s}\right]\nonumber
 \\
&\geq \Exp^x\left[e^{-\int_0^t U_2(X_s)\, \D{s}} v_n(X_t)\Ind_{\{t<\uptau\}}\right] +
\Exp^x\left[\int_0^{t\wedge\uptau} e^{-\int_0^{s} U_2(X_p)\, \D{p}}I_n(X_s)\, \D{s}\right].
\end{align}
 Letting $n\to\infty$ in \eqref{EL3.6C} and using the uniform convergence of $I_n$ and $v_n$, we obtain
\begin{equation}\label{EL3.6D}
v(x)\geq \Exp^x\left[e^{-\int_0^t U_2(X_s)\, \D{s}} v(X_t)\Ind_{\{t<\uptau_\cD\}}\right]\quad 
\text{for all}\; x\in\cD, \; t\geq 0.
\end{equation}
Since $\norm{v}_\infty=1$ and $v\geq 0$ in $\Rd$, it is easily seen from \eqref{EL3.6D} that $v>0$ in $\cD$. 
Hence by \cite[Prop.~4.1]{BL17b} it follows that $\lambda^*_{U_2}\geq 0$, contradicting \eqref{AP1}. This 
proves the first part of the result. The second part follows by Lemma~\ref{L3.5} and \eqref{EL3.6A}.
\end{proof}

With the above results in hand, we can now proceed to prove Theorem~\ref{T-AP}. Define
$$
\cA=\big\{\rho\in\RR\; :\; \eqref{E-AP}\; \text{has a solution}\big\}.
$$
By Lemma~\ref{L3.3} we have that $\cA \neq \emptyset$, and Lemma~\ref{L3.6} imply that $\cA$ is bounded
from above. Define $\rho^*=\sup\cA$. Note that if $\rho'<\rho^*$, then $\rho'\in\cA$. Indeed, there is $\tilde{\rho}
\in (\rho', \rho^*)\cap\cA$ and the corresponding solution $u^{(\tilde{\rho})}$ of \eqref{E-AP} with $\rho=\tilde{\rho}$
is a super-solution at level $\rho'$, i.e.,
$$
\Psidel u^{(\tilde{\rho})}=  f(x, u^{(\tilde{\rho})}) + \rho' \Phi_1 + h(x) +g(x) \quad \text{in}\; \cD,
\quad \text{and} \quad u^{(\tilde{\rho})}=0\quad \text{in}\; \cD^c,
$$
where $g(x)=(\tilde\rho-\rho')\Phi_1\geq 0$.
Using Lemma~\ref{L3.2}(3) and from the proof of Lemma~\ref{L3.3} we have a minimal solution of \eqref{E-AP} with $\rho=\rho'$. Next we show
that there are at least two solutions for $\rho<\rho^*$.

Recall that $\delta_\cD:\bar\cD\to [0, \infty)$ is the distance function from the set $\cD^c$. We can assume that
$\delta_\cD$ is a positive $\cC^1$-function in $\cD$. For a sufficiently small $\varepsilon>0$, to be chosen later, consider
the Banach space
$$
\mathfrak X=\left\{\psi\in \cC_0(\cD)\; :\; \left\|{\frac{\psi}{V(\delta_\cD)}}\right\|_{\cC^\varepsilon(\cD)} < \infty\right\}.
$$
In fact, it is sufficient to consider any $\varepsilon$ strictly smaller than the parameter $\alpha$
in \cite[Th.~1.2]{KKLL}. It should be observed that for every $\psi\in\mathfrak{X}$ we can extend $\psi\cdot [V(\delta_\cD)]^{-1}$ up to the boundary
$\partial\cD$ continuously.

For $\rho\in\RR$ and $m\geq 0$ we define a map $K_\rho:\mathfrak X\to\mathfrak X$ as follows. For $v\in\mathfrak X$,
$K_\rho v =u$ is the unique solution (see Theorem~\ref{T2.1}) to the Dirichlet problem
$$
\Psidel u + m u = f(x, v) + \rho \Phi_1 + h(x)+ mv \quad \text{in}\; \cD, \quad \text{and} \quad u=0\quad \text{in}\; \cD^c.
$$
It follows from \cite[Th.~1.2]{KKLL}
$$\left\|{\frac{\psi}{V(\delta_\cD)}}\right\|_{\cC^\alpha(\cD)}<\infty,$$
for $\alpha>\varepsilon$, and thus $u\in\mathfrak X$. In fact, using the above estimate it can be easily shown that $K_\rho$ is continuous and compact.

\begin{lemma}\label{L3.7}
Let $\rho<\rho^*$. Then there exists $m\geq 0$ and an open $\cO \subset \mathfrak X$, containing the minimal solution,
satisfying $\Deg(I-K_\rho, \cO, 0)=1$.
\end{lemma}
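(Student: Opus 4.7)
The plan is the standard Amann-type sub/super-solution method combined with Leray--Schauder degree theory, adapted to the weighted space $\mathfrak X$. Since $\rho<\rho^*$, I pick $\tilde\rho\in(\rho,\rho^*)\cap\cA$ and set $\bar u\coloneqq u^{(\tilde\rho)}$, the solution of \eqref{E-AP} at level $\tilde\rho$; it is a strict super-solution at level $\rho$ with positive surplus $(\tilde\rho-\rho)\Phi_1$. Let $\underline u$ be the sub-solution produced in Lemma~\ref{L3.2}(1); inspecting \eqref{EL3.2A} shows that the defect $g$ in $\Psidel\underline u=f(\cdot,\underline u)+\rho\Phi_1+h+g$ is bounded from above by $-c_0$ for some $c_0>0$. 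Lemma~\ref{L3.2}(3), together with the monotone iteration of Lemma~\ref{L3.3} applied with super-solution $\bar u$, yields $\underline u\le u_\rho\le\bar u$, where $u_\rho$ is the minimal solution of \eqref{E-AP} at level $\rho$. Pick $m\ge 0$ large enough that $q\mapsto f(x,q)+mq$ is non-decreasing on $[-M,M]$ with $M\coloneqq\max(\|\underline u\|_\infty,\|\bar u\|_\infty)$; this is possible by Assumption~[AP](1). Since $\lambda^*_m=\lambda^*+m>0$, the operator $K_\rho$ is well defined by Theorem~\ref{T2.1}.

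Next I upgrade the pointwise comparisons to strict orderings in the $\mathfrak X$-norm. For any $v\in[\underline u,\bar u]$, subtracting the equations for $K_\rho v$ and $\underline u$ and using the monotonicity of $q\mapsto f(x,q)+mq$ gives
$$\Psidel(K_\rho v-\underline u)+m(K_\rho v-\underline u)\;\ge\;-g\;\ge\;c_0>0\quad\text{in }\cD,\qquad K_\rho v-\underline u=0\text{ in }\cD^c.$$
The Feynman--Kac formula (Lemma~\ref{L3.1}) together with the survival estimate \eqref{EL3.2C2} and the equivalence \eqref{E2.6} then produces a constant $c>0$, independent of $v$, with $K_\rho v-\underline u\ge c\,V(\delta_\cD)$ on $\cD$. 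Similarly
$$\Psidel(\bar u-K_\rho v)+m(\bar u-K_\rho v)\;\ge\;(\tilde\rho-\rho)\Phi_1\quad\text{in }\cD,$$
and since $\Phi_1$ itself solves $(\Psidel+m)\Phi_1=(\lambda^*+m)\Phi_1$, Theorem~\ref{T2.2} forces $\bar u-K_\rho v\ge\tfrac{\tilde\rho-\rho}{\lambda^*+m}\Phi_1$; the pointwise lower bound $\Phi_1\ge\kappa_3 V(\delta_\cD)$ obtained in the proof of Lemma~\ref{L3.2}(2) then yields $\bar u-K_\rho v\ge c'\,V(\delta_\cD)$. Consequently the open set
$$\cO\;\coloneqq\;\Big\{v\in\mathfrak X:\;\inf_{\cD}\tfrac{v-\underline u}{V(\delta_\cD)}>0\;\text{ and }\;\inf_{\cD}\tfrac{\bar u-v}{V(\delta_\cD)}>0\Big\}$$
contains $u_\rho$ and satisfies $K_\rho(\overline{\cO})\subset\cO$.

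To compute the degree, I introduce the compact homotopy $H(t,v)\coloneqq K_\rho(tu_\rho+(1-t)v)$ on $[0,1]\times\overline{\cO}$. For every $v\in\overline{\cO}$ the convex combination $tu_\rho+(1-t)v$ again lies in $[\underline u,\bar u]$, so the previous paragraph shows $H(t,v)\in\cO$; in particular $H(t,\cdot)$ has no fixed points on $\partial\cO$ for any $t\in[0,1]$. Homotopy invariance of the Leray--Schauder degree then yields
$$\Deg(I-K_\rho,\cO,0)=\Deg(I-H(1,\cdot),\cO,0)=\Deg(I-u_\rho,\cO,0)=1,$$
the last equality holding because $H(1,\cdot)\equiv u_\rho$ is a constant map and the affine operator $v\mapsto v-u_\rho$ has its unique zero at $u_\rho\in\cO$.

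The principal obstacle is the second step---establishing the positive $V(\delta_\cD)$-gap between $K_\rho v$ and the barriers $\underline u,\bar u$ uniformly over $v\in\overline{\cO}$. This is the nonlocal replacement of the classical Hopf lemma and depends crucially on the heat-kernel estimates of \cite{BGR14,CKS} and the boundary regularity of \cite{KKLL}, both made available by Assumption~\ref{A2.1}.
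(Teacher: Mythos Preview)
Your approach is essentially the de Figueiredo/Amann sub--super-solution degree argument the paper uses, with two cosmetic differences: the paper truncates $f$ at $\underline u,\bar u$ to define a modified map $\tilde K_\rho$ on all of $\mathfrak X$, whereas you exploit the monotonicity of $q\mapsto f(x,q)+mq$ directly on the order interval; and the paper obtains the degree by noting $\tilde K_\rho(\mathfrak X)\subset\cO$, while you run an explicit convex homotopy to the constant map $v\mapsto u_\rho$. Both variants are standard and valid.

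There is, however, one genuine gap. Your set
\[
\cO=\Big\{v\in\mathfrak X:\ \inf_{\cD}\tfrac{v-\underline u}{V(\delta_\cD)}>0,\ \inf_{\cD}\tfrac{\bar u-v}{V(\delta_\cD)}>0\Big\}
\]
is \emph{not} bounded in $\mathfrak X$: the pointwise constraints $\underline u\le v\le\bar u$ control only $\|v/V(\delta_\cD)\|_{L^\infty}$, not the $\cC^\varepsilon$-seminorm in \eqref{phi-norm}. Consequently the Leray--Schauder degree $\Deg(I-K_\rho,\cO,0)$ is not defined, and neither is your homotopy. The paper handles this by adding the condition $\|\psi\|_{\mathfrak X}<r$ to the definition of $\cO$, with $r$ chosen through \cite[Th.~1.2]{KKLL} so that $K_\rho$ (and your $H(t,\cdot)$) maps the order interval into the open ball of radius $r$. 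You should do the same: once you intersect your $\cO$ with a sufficiently large $\mathfrak X$-ball, the rest of your argument goes through unchanged, since convex combinations $tu_\rho+(1-t)v$ still lie in $[\underline u,\bar u]$ and hence their images under $K_\rho$ land strictly inside the intersected set.

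A minor side remark: your claim that the defect of $\underline u$ satisfies $-g\ge c_0>0$ uses $C_1>C$ in \eqref{EL3.2A}; this holds as long as $h\not\equiv 0$ or $\rho\neq 0$, and in the degenerate case you may simply enlarge $C_1$ by $1$ in the construction of $\underline u$.
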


\begin{proof}
We borrow some of the arguments of \cite{DF80}(see also \cite{BL18}) with a suitable modification. Pick $\bar{\rho}\in (\rho, \rho^*)$ and
let $\bar{u}$ be a solution of \eqref{E-AP} with $\rho=\bar{\rho}$. It then follows that
$$
\Psidel \bar{u} = f(x, \bar{u}) + \rho \Phi_1 + h(x) + g(x) \quad \text{in}\; \cD \quad \text{and} \quad u=0\quad \text{in}\; \cD^c,
$$
for $\bar{g}(x)=(\bar\rho-\rho)\Phi_1$ and by Lemma~\ref{L3.3}(1) we have a classical subsolution
$$
\Psidel \underline{u} = f(x, \underline{u}) + \rho \Phi_1 + h(x) + \underline{g}(x) \quad \text{in}\; \cD \quad \text{and} \quad u=0\quad
\text{in}\; \cD^c,
$$
with $\underline{g}\leq 0$.
Then Lemma~\ref{L3.2}(3) supplies $\underline{u}\leq\bar{u}$ in $\Rd$, hence the minimal solution $u$ of
\eqref{E-AP} satisfies $\underline{u}\leq u\leq\bar{u}$ in $\Rd$. Note that for every $\psi\in\mathfrak X$, the ratio
$\frac{\psi}{V(\delta_\cD)}$ is continuous up to the boundary. Define
$$
\cO=\left\{\psi\in\mathfrak X\; :\; \underline{u}<\psi<\bar{u}\; \text{in}\; \cD, \;\; \frac{\underline{u}}{V(\delta_\cD)}<
\frac{\psi}{V(\delta_\cD)} < \frac{\bar u}{V(\delta_\cD)}\; \text{on}\; \partial\cD, \;\; \norm{\psi}_{\mathfrak X}< r\right\},
$$
where the value of $r$ will be chosen later. It is clear that $\cO$ is bounded, open and convex. Also,
if we choose $r$ large enough, then the minimal solution $u$ belongs to $\cO$. Indeed, note that for $w=u-\underline{u}$
\begin{equation}\label{EL3.7A}
\Psidel w=f(x, u)-f(x,\underline{u}) - \underline{g}\quad \text{in}\; \cD.
\end{equation}
Define 
\[U(x)=\left\{\begin{array}{lll}
\left(\frac{f(x, \underline{u}(x))-f(x, u(x))}{\underline{u}(x)-u(x)}\right)^- & \text{if}\; u(x)\neq\underline{u}(x)\,,
\\
\left(D_uf(u(x), x)\right)^- & \text{if}\; u(x)=\underline{u}(x)\,.
\end{array}
\right.
\]
By Assumption~[AP](1) we have $U\in\cC(\bar\cD)$. Also note that
\begin{align*}
f(x, u)-f(x,\underline{u}) + U(x) w=\left(\frac{f(x, \underline{u}(x))-f(x, u(x))}{\underline{u}(x)-u(x)}\right)^+ w\geq 0,
\end{align*}
since $w\geq 0$. Now applying Lemma~\ref{L3.1} to \eqref{EL3.7A} we obtain that
\begin{equation}\label{EL3.7B}
w(x)\geq \Exp^x\left[e^{-\int_0^t U_1(X_s)\, \D{s}} w(X_t)\Ind_{\{t<\uptau\}}\right],\quad t\geq 0.
\end{equation}
Using estimate \eqref{EL3.2C1} it is obvious that $w>0$ in $\cD$. Choose $t=2$, $\cD_1\Subset\cD$ and use \eqref{EL3.7B} to obatin
\begin{align*}
w(x) &= e^{-2\norm{U}_\infty} \Exp^x\left[ w(X_t)\Ind_{\{2<\uptau\}}\right]
\\
&= e^{-2\norm{U}_\infty} \int_\cD w(y) p^\cD(2, x, y)\, \D{y}
\\
&\geq e^{-2\norm{U}_\infty} \int_{\cD_1} w(y) p^\cD(2, x, y)\, \D{y}
\\
&\geq \kappa_1 e^{-2\norm{U}_\infty}\, \min_{\cD_1} w\, \Prob^x(\uptau>1) \int_{\cD_1} \Prob^y(\uptau>1) p(1\wedge V^2(r), \abs{x-y})\, \D{y}
\\
&\geq \kappa_2\, p(1\wedge V^2(r), 0) \Prob^x(\uptau>1) \int_{\cD_1} \Prob^y(\uptau>1)\, \D{y},
\end{align*}
for some constant $\kappa_2$, where in the fourth inequality we use \eqref{EL3.2C1}. Now using \eqref{EL3.2C2} we can find a constant $\kappa_3>0$ satisfying
$$
w(x)\geq \kappa_3 V(\delta_\cD(x)), \quad x\in \cD.
$$
This of course, implies
$$
\min_{\partial\cD}\left(\frac{u}{V(\delta_\cD)}-\frac{\underline{u}}{V(\delta_\cD)}\right)>0.
$$
Similarly, we can compare also $u$ and $\bar{u}$.

We define $m$ to be a Lipschitz constant of $f(x, \cdot)$ in the interval $[\min\underline{u}, \max\bar{u}]$. Also, define
\[
\tilde{f}(x, q)= f \left(x, (\underline{u}(x)\vee q)\wedge\bar{u}(x)\right) + m (\underline{u}(x)\vee q)\wedge\bar{u}(x).
\]
Note that $f$ is bounded and Lipschitz continuous in $q$, and also non-decreasing in $q$. We define another map $\tilde
K_\rho: \mathfrak X \to \mathfrak X$ as follows: for $v\in\mathfrak X$, $\tilde K_\rho v=\tilde{u}$ is the unique 
solution of
\begin{equation}\label{EL3.7C}
\Psidel \tilde{u} + m \tilde{u} = \tilde{f}(x, v) + \rho\Phi + h  \quad \text{in}\; \cD, \quad \text{and} \quad u=0\quad \text{in}\; \cD^c.
\end{equation}
It is easy to check that $K_\rho$ is a compact mapping. Since the right hand side of \eqref{EL3.7C} is bounded,
using again \cite[Th.~1.2]{KKLL}, we find $r$ satisfying
$$
\sup\left\{\norm{\tilde K_\rho v}_{\mathfrak X}\; :\; v\in\mathfrak X\right\} < r.
$$
We fix this choice of $r$. We now show that $\tilde{K}_\rho v\in\cO$ for all $v\in\mathfrak{X}$. Let $\tilde{u}=\tilde{K}_\rho v$.
Then
\begin{align*}
\Psidel(u-\underline{u})  = -m(u-\underline{u}) + \tilde{f}(x, v) -m\underline{u} - f(x, \underline{u})-\underline{g}
\end{align*}
Since 
$$
\tilde{f}(x, v) -m\underline{u} - f(x, \underline{u})-\underline{g}\geq  \tilde{f}(x, \underline{u}) -m\underline{u} - f(x, \underline{u})=0,
$$
from Lemma~\ref{L3.1} we note that for $w=\tilde{u}-\underline{u}$
\begin{equation}\label{EL3.7D}
w(x)\geq \Exp^x\left[e^{- mt} w(X_t)\Ind_{\{t<\uptau\}}\right]\,.
\end{equation}
Thus letting $t\to\infty$ in \eqref{EL3.7D} we have obtain $w\geq 0$. Since $\underline{g}\lneq 0$, it follows from \eqref{EL3.7C} that $w$ can not be identically $0$.
Hence again applying \eqref{EL3.7D} we obtain $w>0$ in $\cD$. Repeating the arguments as above (see below \eqref{EL3.7B}) we also have
$$
\min_{\partial\cD}\left(\frac{\tilde{u}}{V(\delta_\cD)}-\frac{\underline{u}}{V(\delta_\cD)}\right)>0.
$$
The other estimates with respect to $\bar{u}$ can be obtained similarly. Finally, this implies that $\tilde K_\rho v\in \cO$, for all $v\in\mathfrak X$.
Moreover, $0\notin (I-\tilde K_\rho)(\partial\cD)$. Then by the homotopy invariance property of degree we find that
$\Deg(I-\tilde K_\rho,\cO, 0)=1$ (see for instance, \cite{DF80}). Since $\tilde K_\rho$ coincides with $K_\rho$ in $\cO$, we obtain $\Deg(I-K_\rho, \cO, 0)=1$.
\end{proof}

Similarly as before, define $\cS_\rho:\mathfrak X\to\mathfrak X$ such that for $v\in\mathfrak X$, $u=\cS_\rho v$ is given
by the unique solution of
$$
\Psidel u = f(x, v) + \rho \Phi_1 + h(x) \quad \text{in}\; \cD, \quad \text{and} \quad u=0\quad \text{in}\; \cD^c.
$$
Then the standard homotopy invariance of degree (w.r.t. $m$) gives that $\Deg(I-\cS_t, \cO, 0)=1$. This observation will be helpful
in concluding the proof below.

\begin{proof}[Proof of Theorem~\ref{T-AP}]
Using Lemma~\ref{L3.7} we can now complete the proof by using \cite{DF80,DFS}. Recall the map $\cS_\rho$ defined above, and
fix $\rho<\rho^*$. Denote by $\cO_R$ a ball of radius $R$ in $\mathfrak X$. From Lemma~\ref{L3.6} and \cite[Theorem~1.2]{KKLL}  we find that
$$
\Deg(I-\cS_{\tilde \rho}, \cO_R, 0)=0\quad \text{for all}\;\, R>0, \; \tilde{\rho}\geq \rho_2.
$$
Using again Lemmas~\ref{L3.6} and \cite[Th.~1.2]{KKLL}, we obtain that for every $\hat\rho$ there exists a constant
$R$ such that
$$
\norm{u}_{\mathfrak X} < R
$$
for each solution $u$ of \eqref{E-AP} with $\tilde{\rho}\geq -\hat\rho$. Fixing $\hat\rho>\abs{\rho}$ and the corresponding
choice of $R$, it then follows from homotopy invariance that $\Deg(I-\cS_\rho, \cO_R, 0)=0$. We can choose $R$ large enough so
that $\cO\subset\cO_R$ where $\cO$ is from Lemma~\ref{L3.7}.
 Since $\Deg(I-\cS_{\rho}, \cO, 0)=1$, as seen above, using the excision property of degree we conclude that
there exists a solution of \eqref{E-AP} in $\cO_R\setminus\cO$. Hence for every $\rho<\rho^*$ there exist at least two
solutions of \eqref{E-AP}. The existence of a solution at $\rho=\rho^*$ follows from the a priori estimates in Lemma~\ref{L3.6},
the estimate in \cite[Theorem.~1.1]{KKLL}, and the stability property of the semigroup solutions. This completes
the proof of Theorem~\ref{T-AP}.
\end{proof}

\subsection*{Acknowledgments}
This research of Anup Biswas was supported in part by an INSPIRE faculty fellowship and a DST-SERB grant EMR/2016/004810.

\end{document}